\definecolor{darkgreen}{rgb}{0,0.5,0}
\newcommand{\kibitz}[2]{\ifnum\Comments=0\textcolor{#1}{#2}\fi}
\newcommand{\edit}[1]{\kibitz{black} {#1}} 
\newcommand{\newedit}[1]{\kibitz{black} {#1}} 
\newcommand{\R}{\mathbb{R}}
\newtheorem{definition}{Definition}
\newtheorem{theorem}{Theorem}
\newtheorem{remark}{Remark}
\newtheorem{proposition}{Proposition}
\newtheorem{corollary}{Corollary}
\title{Identifiability of car-following dynamics}
\date{}
\author{Yanbing Wang
\thanks{Department of Civil and Environmental Engineering and the Institute for Software Integrated Systems, Vanderbilt University, Nashville, TN 37240} 
\and Maria Laura Delle Monache
\thanks{Univ. Grenoble Alpes, Inria, CNRS, Grenoble INP, GIPSA-Lab, 38000 Grenoble, France and Department of Civil and Environmental Engineering, University of California, Berkeley. Present address: Department of Civil and Environmental Engineering, University of California, Berkeley, 760 Davis, MC 1710, Berkeley, CA, 94720-1710, USA}
\and Daniel B. Work\footnotemark[1]}
\begin{document}

\maketitle
%\tableofcontents
\begin{abstract}
    The advancement of in-vehicle sensors provides abundant datasets to estimate parameters of car-following models that describe driver behaviors. The question of parameter identifiability of such models (i.e., whether it is possible to infer its unknown parameters from the experimental data) is a central system analysis question, and yet still remains open. This article presents both structural and practical parameter identifiability analysis on four common car-following models: \textit{i}) the constant-time headway relative-velocity (CTH-RV) model, \textit{ii}) the optimal velocity model (OV), \textit{iii}) the follow-the-leader model (FTL) and \textit{iv}) the intelligent driver model (IDM). 
    The structural identifiability analysis is carried out using a differential geometry approach, which confirms that, in theory, all of the tested car-following systems are structurally locally identifiable, i.e., the parameters can be uniquely inferred under almost all initial condition and admissible inputs by observing the space gap alone. In a practical setting, we propose an optimization-based numerical direct test to determine parameter identifiability given a specific experimental setup (the specific initial conditions and input are known). The direct test conclusively finds distinct parameters under which the CTH-RV and FTL are not identifiable under the given initial condition and input trajectory.

   \end{abstract}

\section{Introduction}
\paragraph{Motivation.} 
With the advancement of sensor technologies, abundant traffic data is readily available to study traffic patterns and individual driving behaviors. Examples of such datasets include overhead camera data~\cite{NGSIM2006,highDdataset} and floating-car data~\cite{PunzoModelCalibration2005,Treiber2013,acc_data,nuscenes2019}. The collection of such datasets support tasks such as the parameterization of microscopic models which describe individual vehicle car-following behaviors.

Studies of car-following models and calibration of such models have mostly been focused on data-fitting quality. Model calibration is usually posed as an optimization problem such that the best fit parameters are found by minimizing the error between the model prediction and corresponding measurement data~\cite{kesting2010enhanced,PunzoModelCalibration2005}, or through probabilistic approaches to find the most likely parameter candidate~\cite{wang2020estimating, abodo2019strengthening}. Although the approaches report good accuracy of the estimated parameters, they lack a theoretical guarantee that a unique parameter set can be recovered, which is provided by an identifiability analysis of the evolution-observation system~\cite{BELLMAN1970329,ljung1999system}. In other words, using data-fitting quality as the metric alone for model calibration does not guarantee the uniqueness of the best-fit parameters, nor can it tell how robust the current estimation is under a different experimental dataset or numerical setup~\cite{ljung1999system}. Such ambiguity might result in a different parametrization when repeating the experiment, leading to an inaccurate characterization of car-following behaviors, such as analytically determining string stability~\cite{monteil2018mathcal}.

\edit{
Besides that different driver behaviors on a microscopic scale can greatly change traffic features on a macroscopic scale~\cite{LAVAL2014228,LI2017headway,PunzoModelCalibration2005}, knowledge of microscopic model parameters is key to understand individual driving behavior of the adaptive cruise control  and the interplay between automated vehicles and human drivers~\cite{Bando1995dynamical,Gazis1959,gunter2019modelcomparison,milanes2014modeling,milanes2014cooperative,wang2020estimating,HE2015185,kesting2010enhanced}. 
Therefore, the identifiability of car-following models is an important question to advance calibration of these models.}

\paragraph{Problem statement.}
The parameter identifiability problem refers to the ability to uniquely recover the unknown model parameters from the observation data. Mathematically, consider a general form of a dynamical system:
\begin{equation}
    \label{eq:dyn_sys_general}
    \left\lbrace
    \begin{array}{l}
         \dot{x}(t)= f(x(t), u(t),\theta)  \\
         y(t)= g(x(t), u(t), \theta)\\
         x(0) = x_0,
    \end{array}
    \right.
\end{equation}
where $x(t) \in \R^{n_x}$ is the state variable vector, $u(t)$ is the scalar input, and $\theta \in \R^{n_{\theta}}$ is the parameter vector; $f,\ g$ are analytic vector functions describing the evolution of the state and the measurement model, and $x_0$ is the initial condition. The generic question for parameter identifiability is: \textit{Given the system \eqref{eq:dyn_sys_general} and a known input $u(t)$, can we uniquely determine the model parameters $\theta$ from the output $y(t)$ for a given initial condition?}
In the context of car-following models, the system of equations can be written as:
\begin{equation}
    \label{eq:dyn_sys_CF}
    \left\lbrace
    \begin{aligned}
    \dot{x}(t) &= 
    \begin{bmatrix}
    \dot{s}(t)\\
    \dot{v}(t)
    \end{bmatrix}
    =
    \begin{bmatrix}
    u(t)-v(t)\\
    f_{\text{CF}}(x(t), u(t), \theta)  
    \end{bmatrix}\\
     y(t) &= x(t)\\
     x(0) &= \begin{bmatrix}
    s_0\\
     v_0
    \end{bmatrix}.
     \end{aligned}
      \right.
\end{equation}
 The state vector $x(t)$ in \eqref{eq:dyn_sys_CF} is given by the space gap $s(t)$  and the velocity $v(t)$ of the ego vehicle.  The space gap is taken as the distance between the lead vehicle and the ego (follower) vehicle while the speed is that of the ego vehicle. The input $u(t)$ is the velocity profile of a lead vehicle and $f_{\text{CF}}$ is usually a 2$^{\text{nd}}$ order ordinary differential equation (ODE) that describes the car-following dynamics.

With in-vehicle sensor data such as radar data being available, it is now possible to directly measure $s(t)$ and $v(t)$, as well as the velocity profile of the lead vehicle  $u(t)$. %This  $y(t) = x(t)+\delta$, where $\delta$ is the measurement noise and is only taken into consideration in practical identifiability analysis. 
For \eqref{eq:dyn_sys_CF}, the parameter identifiability question becomes: \textit{Given the output time-series $y(t)$ (the velocity and space gap measurements) and the input $u(t)$, can we uniquely determine the model parameters $\theta$ in $f_{\text{CF}}$?}

One type of identifiability is \textit{structural identifiability} (or theoretical identifiability), which investigates the input-output configuration and relies on algebraic calculations to determine if the parameter set is unique from the observation~\cite{walter1997identification,LJUNG1994265}. It is completely defined by the model structure and does not consider data quality or model errors. A \textit{structural identifiability} analysis can be performed prior to the estimation of the unknowns in order to detect potential model structural issues and separate them from numerical problems, which should be dealt with differently~\cite{villaverde2019full}. However, structurally identifiable systems do not preclude the existence of some specific initial conditions under which the system becomes unidentifiable~\cite{saccomani2003parameter,Villaverde2017initial}. For this limitation, \textit{practical identifiability} can be complimentary to provide insights on the dynamical models.

\textit{Practical identifiability} is usually based on numerical methods to determine, for a given experimental setup, whether the parameters are unique. This is sometimes modified to also consider the presence of moderate measurement errors. In practice, a realistic way to formulate the identifiability problem when performing an experiment becomes: \textit{for a given initial condition and an input trajectory that are known, whether there exists distinct valid parameters that produce the same outputs?} \edit{Notice that ensuring uniqueness of the parameters (e.g. there are no distinct parameters that produce the same output) does not imply well-posedness of the inverse problem. When solving the inverse problem, one needs to ensure additional properties, e.g. continuity w.r.t. the initial datum, to guarantee the recovery of the parameters and reconstruction the input.}

This paper investigates, from both theoretical and practical angles, parameter identifiability of a microscopic car-following system instantiated as one of four different car-following models, i.e., the constant-time headway relative-velocity (CTH-RV) model~\cite{Bando1995dynamical,milanes2014modeling,gunter2019modelcomparison}, the optimal velocity (OV) model~\cite{Bando1995dynamical}, the follow-the-leader (FTL) model~\cite{garavello} and the intelligent driver model (IDM)\cite{treiber2000congested}. Only CTH-RV is a linear model while other three are nonlinear. We address the following questions:
\begin{enumerate}
    \item Are the car-following systems of the form~\eqref{eq:dyn_sys_CF} structurally identifiable?
    \item Are they identifiable in practice given a specific experimental setup (i.e., given a known initial condition and a known input)?
    \item Are they identifiable in practice given a specific experimental setup when moderate measurement errors are allowed?
\end{enumerate}
We address the question of structural identifiability using a \textit{differential geometry approach}, and practical identifiability using a \textit{numerical direct test}. The main findings to the above three questions are:
\begin{enumerate}
    \item The investigated car-following models are all structurally locally identifiable, i.e., almost all points in the parameter space of each of the car following models can be uniquely identified for all admissible inputs given almost \newedit{all} initial conditions. Special initial conditions and parameter sets that lose structural identifiability are also discovered.
    \item Given a specific experimental setup (the specific initial condition and input that are known), the CTH-RV and FTL are not identifiable, i.e., there exist distinct parameters that produce the same output.
    \item Given a specific experimental setup when measurement errors are present, all models are not practically identifiable, i.e., there exist distinct parameters that produce the same output within a small error bound. 
\end{enumerate}

\paragraph{Contributions.}
 Throughout the literature of microscopic traffic modeling we found that the focus has been on model calibration techniques instead of parameter identifiability. In this light, the main contributions of this article are the following. (\textit{i}) We provide structural identifiability analysis of four common car-following models using a differential geometry framework. This allows us to detect if there are structural issues of the dynamic model and to distinguish them from other possible causes of calibration failures, such as the choice of optimization algorithms. (\textit{ii}) We subsequently provide a numerical \textit{direct test}, which is a constrained optimization problem, to check identifiability for a specific experimental design (for a given and known initial condition and input; (\textit{iii}) finally we investigate the practical identifiability for a specific experimental design when moderate measurement errors are present,  also with a \textit{direct test}.  This study therefore provides both analytical and practical insights on parameter identification.

The organization of this paper is as follows. In Section~\ref{sec:related} we summarize the existing microscopic traffic model calibration work, as well as methods derived in other research fields to tackle parameter identifiability problems. In Section~\ref{sec:preliminaries} we present the four microscopic car-following models that we analyze in this work. We introduce the methodological tools used, including the \textit{differential geometry} approach and the \textit{numerical direct test} approach, in Section~\ref{sec:method}. Section~\ref{sec:structural_id_results} provides the structural identifiability results of the selected car-following models, and Section~\ref{sec:practical_id_results} presents the practical identifiability results.

\section{Related work}
\label{sec:related}
The inverse problem of learning microscopic car following model parameters through experimental data (e.g., microscopic-level trajectory data and macroscopic level aggregated counts from fixed sensors) is part of the process to calibrate complex microscopic traffic simulation software.  A recent review of these calibration techniques appears in~\cite{LI2020225}, and is commonly categorized  into four types~\cite{panwai2005comparative,Punzo2012CanRO,punzo2015do,LI2017headway}. Briefly put, \textit{type I} calibration views the calibration problem as a likelihood estimation problem where the distribution of the parameter likelihood is calculated for the future time step based on historical driving data (e.g., \cite{panwai2005comparative, Treiber2013, hoogendoorn2010generic,TREIBER2013922,punzo2015do,PunzoModelCalibration2005, wang2020estimating}). \textit{Type II} calibration directly uses a global search to find the best-fit parameters for which the simulated complete trajectory most closely represent the observed trajectory (e.g., \cite{ma2002genetic,wang2010usingTraj, ciuffo2014nofree,Papathanasopoulou2015towards,gunter2019modelcomparison}). \textit{Type III} calibration considers the long-range interactions amongst vehicles within a platoon (e.g., \cite{LAVAL2014228, Kurtc2015calibratingLocal,HE2015185}), and \textit{type IV} calibration relies on mesoscopic or macroscopic traffic flow patterns such as the headway distributions (e.g., \cite{JIN2009318,LI2017headway}). This paper considers parameter identifiability of microscopic models given microscopic data (e.g., velocity and spacing), which are closely related to \textit{Type I} and \textit{II}. More background on these \textit{Type I} and \textit{II} calibration methods can be found in book chapters such as~\cite{Treiber2013,Chen2015stochasticBook}, review articles~\cite{kesting2008calibrating,Hollander2008principles,hoogendoorn2010generic,LI2016global,LI2020225} and the references therein.

% type III considers optimizing on the discrepancy between the simulated trajectories of the entire platoon with prescribed initial conditions and leader velocities, compared to the observed trajectories (e.g., \cite{Kurtc2015calibratingLocal}); finally type IV calibration aims to validate car-following models based on mesoscopic or macroscopic simulated traffic flow patterns, e.g., headway distributions or phase diagrams~\cite{}.

%  All these things are important, people have looked at parameter sensitivities and uniqueness in a variety of different ways (cite variance-based global sensitivity analysis, monte-carlos analysis, etc. on IDM models, MCMC approaches) all in the same spriti of parameter identifiability problem but not the explicit formal treatment. 

%Accurate parameter estimation is important for a number of applications including correct characterization of  longitudinal driving behavior, and prediction of corresponding macroscopic traffic flow patterns. 
In addition to the question of determining the best fit parameters given experimental data, several studies have also considered the confidence level and the sensitivity of parameter estimation. For example, Punzo et al.~\cite{punzo2015do} considered a variance-based global sensitivity analysis to produce the importance ranking of the IDM parameters, and to consequently reduce the parameters to be further calibrated. Monteil \& Bouroche~\cite{Monteil_identifiability} considered a systematic statistical approach to first use a global sensitivity analysis to reduce the parameter space, then log likelihood estimation for the insensitive parameters and finally likelihood-ratio for interval estimation; Treiber \& Kesting~\cite{TREIBER2013922} investigated on the data sampling interval, completeness and parameter orthogonality and their effects on parameter calibration of the IDM, instead of solely on data-fitting quality. The sensitivity approaches considered in the mentioned works are closely related to identifiability~\cite{miao2011,Monteil_identifiability}. However, a formal analysis of identifiability on microscopic traffic models (from a theoretical perspective) has not yet been explicitly studied in the previous work.

Although a systematic study on the parameter identifiability for microscopic traffic models still remains unaddressed in transportation research, it has been developed in and extensively applied to other research fields such as waste water treatment process~\cite{oliver2001waste}, robot dynamics~\cite{Khosla2007robot}, and biological processes~\cite{TUNCER20181}, where the system evolution equations are also expressed as ODEs. We summarize the related approaches for tackling structural and practical identifiability respectively.

Structural identifiability of dynamical systems is closely related to the algebraic formulation of the dynamic equations, and provides theoretical possibility for uniquely inferring the system unknowns \textit{a priori} before collecting experimental data~\cite{glad1997solvability,LJUNG1994265}. It can be analyzed using similarity transformation~\cite{WALTER19811,CHAPPELL1992241,Hermann,VAJDA1989217} (applicable to autonomous systems), Laplace transform~\cite{BELLMAN1970329}, power series expansion~\cite{pohjanpalo,Grewal}, implicit functions~\cite{xia2003}, differential algebra~\cite{Ritt1950, GerdtImproved, LJUNG1994265} and differential geometry~\cite{villaverde2016structural} (applicable to systems with external inputs). Multiple software packages have also been developed for direct implementation of some of the techniques, for example, DAISY~\cite{Bellu2007DAISY}, COMBOS~\cite{Meshkat2014COMBOS}, SIAN~\cite{hong2019global} and STRIKE-GOLDD~\cite{villaverde2019full,Villaverde2019input}. All the mentioned tools are suitable for analyzing systems of which dynamics are written as rational functions. STRIKE-GOLDD, in particular, is capable for analyzing nonlinear, non-rational system dynamics as well, making it suitable for analyzing a wider class of car-following models.

% Amongst all the theoretical approaches to assess local structural identifiability, \textit{differential algebra}~\cite{Ritt1950, GerdtImproved, LJUNG1994265} has gained attention in practice due to its well-established theories, feasibility for general nonlinear dynamic systems, and availability of several computational algorithms and software packages~\cite{miao2011}. To briefly summarize, \textit{differential algebra} works by first computing the characteristic set of the nonlinear ODEs, which contains all the identifiability information of the original ODEs, and then performing a parameter mapping. If such mapping is unique, then the model is identifiable. For more details, interested readers are encouraged to refer to~\cite{GerdtImproved, Kolchin, LJUNG1994265, Ritt1950}. 
% As a generalization to differential algebra, a recent differential geometry based approach~\cite{Martinelli,Isidori,vidyasagar,Sontag, villaverde2019full} has been developed to consider the possibility to infer the unknown inputs and states (or individually known as reconstructability and observability) in addition to parameter identifiability. The joint analysis provides richer information for experiment design and control of the systems.

% Although structural identifiability is supported by numerous software packages, a structurally identifiable system does not prohibit the existence of some special initial conditions from which some parameters (belonging to a set of measure zero) are impossible to be determined uniquely~\cite{saccomani2003parameter,Villaverde2017initial}.
In practice when an experiment is fully defined (the specific values of the initial condition and the input trajectory are known), numerical methods are often useful to assess identifiability. The numerical methods can be used to explore the profile likelihood~\cite{Raue2009profile,kreutz2018easy}, which allows to derive likelihood-based confidence intervals for each parameter and recovers the functional relations between parameters due to non-identifiability. The sensitivity matrix~\cite{STIGTER2015118} combines numerical calculations with a tractable symbolic computation to investigate local structural identifiability.

Numerical methods can also help to assess practical identifiability, which relaxes the noise-free assumption of $f$ and $g$ in the structural analysis. Methods such as Monte Carlo simulations~\cite{metropolis} help to check the relative error of estimated parameters under noisy output measurements. Numerically evaluating the profile likelihood can help to distinguish practically unidentifiable parameters due to measurement error from structurally unidentifiable ones by the shape of the likelihood profile~\cite{maiwald2016profile}..

\section{Car-following models}\label{sec:preliminaries}  
In this section, we briefly present four commonly used car-following models that describe the acceleration  dynamics in the form of an ODE. 

%The car-following models $f_{\text{CF}}$ in~\eqref{eq:dyn_sys_CF} have various functional forms to describe the acceleration dynamics. The dynamics $f_{\text{CF}}$ is a function of the parameter vector $\theta$ to be estimated. In this paper, we consider the following four commonly-used models to describe either adaptive cruise control (ACC) or human car-following behaviors.

\paragraph{Constant-time headway relative-velocity (CTH-RV) model}
The CTH-RV model has been predominately used to describe the car-following behaviors of ACC vehicles~\cite{milanes2014modeling,milanes2014cooperative,gunter2019modelcomparison,wang2020estimating,Bareket2003methodology,Liang2010vehicle}. It is a simple model with linear dynamics with respect to the space gap and relative velocity $\Delta v(t):= u(t)-v(t)$:
\begin{equation}
\label{eq:CTH-RV}
\dot{v}(t) = k_1(s(t)- \tau  v(t))+k_2(\Delta v(t)),
\end{equation}
where $k_1$ and $k_2$ are non-negative gains and $\tau $ is the constant time-headway. The three (time-invariant) parameters constitute the parameter vector of this model, i.e., $\theta = [k_1, k_2, \tau]$, whose identifiability is to be determined.

The equilibrium initial condition $x_0^*$ for the car-following system~\eqref{eq:dyn_sys_CF} under the acceleration dynamic~\eqref{eq:CTH-RV} is: 
\begin{equation}
    \label{eq:CTH-RV-equilibrium}
    x_0^*=[\tau u_0, u_0]^T,
\end{equation}
where $u(0)=u_0$ is the initial velocity of the leader. It is easy to see that this initial condition results in $\dot{s}(0)=u_0-v_0=0$ and $\dot{v}(0)=f_{CF}(x_0^*,u_0, \theta)=0$. 

\paragraph{Optimal velocity (OV) model}
We also consider the car-following model proposed by Bando et al.~\cite{Bando1994StructureSO,Bando1995dynamical}, where an optimal velocity term $V(s(t))$ is introduced to describe the desired spacing-speed relationship at equilibrium:
\begin{equation}
    V(s(t)) = a\left( \tanh\left(\dfrac{s(t)-h_m}{b}\right)+\tanh\left(\dfrac{h_m}{b}\right)\right),
\end{equation}
where the parameters $a,h_m,$ and $b$ determine the shape of the optimal velocity function, which increases monotonically as a function of $s(t)$, and asymptotically plateaus at a maximum speed as $s\rightarrow \infty$.
Consequently the vehicle accelerates and decelerates to achieve the optimal velocity:
\begin{equation}
    \label{eq:bando}
    \dot{v}(t) =
\alpha\left(V(s(t))-v(t)\right),
\end{equation}
where the parameter $\alpha$ determines the sensitivity of the stimulus, which is the difference between the desired velocity $V(s(t))$ and the actual velocity $v(t)$.  The parameter vector of this model is $\theta = [\alpha, a, h_m, b]$.  The corresponding equilibrium initial condition for~\eqref{eq:dyn_sys_CF} under~\eqref{eq:bando} is: 
\begin{equation}
    \label{eq:OV-equilibrium}
    x_0^*=\left[h_m- b*\mathrm{tanh}^{-1}\left(\mathrm{tanh}\left(\frac{h_m}{b}\right)-\frac{u_0}{a}\right), u_0\right]^T.
\end{equation}

%  An extension of this model to attempt to account for switching between space-gap management (ACC) and speed management (cruise control) can be written as such:

% \begin{equation}\label{eq:ACC_CC_Switched}
% f(v,s,\Delta v) = \begin{cases}
% k_{1}(s - t_{h}v) + k_{2}(\Delta v) &\text{if } s < V_{m} t_{h}\\
% k_{3}(V_{m} - v) &\text{else}\\
% \end{cases}
% \end{equation}

% This model has a simple range-based switching criteria which decides whether the vehicle is in ACC or CC. A different approach to this is to use criteria defined by what is referred to as Range-to-Range-Rate switching criteria.

% \subsubsection*{Gazis-Herman-Rothery (GHR) model}
\paragraph{Follow-the-Leader (FTL) model}
% The second model is also a widely known one originated from research conducted by General Motors in the 1950s. Here, we adopt a form without driver's reaction delay.

% \begin{equation}
%     \label{eq:FTL}
%     \dot{v}(t) =c v(t)^m \dfrac{\Delta v(t)}{s(t)^l},
%     %  \dfrac{C(\Delta v(t))}{s(t)^\gamma}
% \end{equation}
% which is composed of the sensitivity term $\frac{c v(t)^m}{s(t)^l}$ and the stimulus term $\Delta v(t)$. The parameters $c, m, l$ are constants.

The third model is one of the simplest follow-the-leader variations of the Gazis-Herman-Rothery (GHR) car-following model, which originated from research conducted by General Motors in the 1950s~\cite{Gazis1959,GHR,garavello}. We consider the following form:
\begin{equation}
    \label{eq:FTL}
    \dot{v}(t) =
     \dfrac{C\Delta v(t)}{s(t)^\gamma},
\end{equation}
where the parameter $C$ and $\gamma$ are constants describing the sensitivity of $\Delta v(t)$ and $s(t)$, respectively, or the acceleration. The parameter vector of interest is $\theta = [C, \gamma]$. The corresponding equilibrium initial condition $x_0^*$ for~\eqref{eq:dyn_sys_CF} under~\eqref{eq:FTL} is:
\begin{equation}
    \label{eq:FTL-equilibrium}
    x_0^*=[s_0, u_0]^T.
\end{equation}

\paragraph{Intelligent driver model (IDM)}
The intelligent driver model was proposed in~\cite{treiber2000congested} to model a realistic driver behavior, such as asymmetric accelerations and decelerations. It is of the form:
\begin{equation}
\label{eq:Enhanced_IDM}
\dot{v}(t) = a\left[1-\left(\dfrac{v(t)}{v_f}\right)^{4}-\left(\dfrac{s^*(v(t),\Delta v(t))}{s(t)}\right)^2\right]
\end{equation}
where the desired space gap $s^*$ is defined as:
\begin{equation}
    s^*(v(t),\Delta v(t)) = s_j + v(t) T + \dfrac{v(t)\Delta v}{2\sqrt{ab}}.
\end{equation}
The parameters of the model $\theta = [s_j, v_f, T, a, b]$ are the freeflow speed $v_f$, the desired time gap $T$, the jam distance $s_j$, the maximum acceleration $a$ and the desired deceleration $b$. The corresponding equilibrium initial condition $x_0^*$ is:
\begin{equation}
    \label{eq:IDM-equilibrium}
    x_0^*=\left[\frac{s_j+u_0*T}{\sqrt{1-(u_0/v_f)^4}}, u_0\right]^T.
\end{equation}

% Structural identifiability refers to the theoretical possibility of uniquely determining the parameter values from the input-output measurements~\cite{ljung1999system,walter1997identification,villaverde2016}. It is completely defined by the model structure and input-output mapping, and does not consider data quality or model errors. Thus, structural identifiability analysis can be performed on the ODEs, with given input and initial conditions if known.

\section{Methodology}
\label{sec:method}
In this section we highlight the methods for identifiability analysis. The structural local identifiability is carried out through a \textit{differential geometry} method, and the practical identifiability considering a full experimental setup and measurement error is examined using a \textit{numerical direct test} method.

\subsection{Differential geometry framework for structural identifiability}
We first introduce the concepts of structurally global and structurally local identifiability via precise definitions. \newedit{Note that the following definitions are modified from references such as~\cite{villaverde2019full, Grewal,LJUNG1994265,WALTER1982472}, in order to suit the specific format of our analysis.}\\ 

%Consider a general form of a dynamical system given by~\eqref{eq:dyn_sys_general}.
% \begin{definition}
% Let $x(t) \in \R^{n_x}$ be a state vector, $u \in \R$ the input, $\theta \in \R^{n_{\theta}}$ a parameter vector and $y(t, \theta, u) \in \R^{n_y}$ the output. Let $\mathcal{M}(\theta)$ and $\mathcal{M}{\theta^*}$ be models with structure as \eqref{eq:dyn_sys_general}. $\mathcal{M}(\theta)$ and $\mathcal{M}{\theta^*}$ are said to be \textit{output-indistinguishable} \cite{WALTER1982472} if and only if
% \end{definition}

\begin{definition}\label{def:struct_glob_ident}
\edit{Let $\theta \in \rm I\!R^{n_p}$ denote the generic parameter vector, \newedit{$\mathcal{X}_0$ a set of generic initial conditions} and $\mathcal{U}$ a set of admissible inputs. Let $y(t,\theta, x_0,u )$ be the output function from the state-space model~\eqref{eq:dyn_sys_general}. If for all $t>0$, }
\newedit{
\begin{equation}
    \label{eq:struct_glob_2}
    y(t,\theta, x_0, u) \equiv y (t,\theta^*, x_0, u) \Rightarrow \theta=\theta^*
\end{equation}
}for almost \newedit{all} $\theta\in \rm I\!R^{n_p}$, \newedit{almost all $x_0\in \mathcal{X}_0$} and every input $u \in \mathcal{U}$ then the model is said to be structurally globally identifiable.
\end{definition}

\begin{definition}
\label{def:struct_loc_ident}
\edit{A dynamical system given by (1) is structurally locally identifiable (s.l.i.) if for almost all $\theta\in {\rm I\!R}^{n_{p}}$ there exists a neighborhood $\mathcal{N}(\theta)$ such that, for all $\theta_1,\theta_2\in \mathcal{N}(\theta)$, the implication~\eqref{eq:struct_glob_2} holds for all $t>0$.}
\end{definition}

\begin{remark}
% Notice that Definitions 1. and 2. do not depend on the initial condition. It is assumed the initial condition is unknown, and therefore the definitions must hold for almost all initial conditions.\Yanbing{change to the following:}
\newedit{Notice that Definitions 1. and 2. consider the generic parameter vector and initial conditions, and thus may not be applicable to parameters and initial conditions that fall into a measure zero set.}
\end{remark}
\begin{remark}
\edit{We consider admissible inputs are polynomial inputs of degree $n$ (with $n$ non-zero time derivative) that enables structurally identifiable systems. In particular, no admissible input exists for structurally unidentifiable systems.}
\end{remark}

% \begin{remark}
%  From Definition \ref{def:struct_glob_ident} and Definition \ref{def:struct_loc_ident}, it follows that all admissible inputs must be persistently exciting according to~\cite{LJUNG1994265}. No admissible input exists for structurally unidentifiable systems.
% \end{remark}
%\ml{Finished reworking here....still work in progress}\\

The differential geometry framework~\cite{villaverde2018observability} considers identifiability as an augmented observability property for a general nonlinear system of ODEs such as~\eqref{eq:dyn_sys_general}, and it can be evaluated in the same manner.
% In general, observability is checked by looking at the rank of the \textit{observability matrix} $\mathcal{O}(x)$ that links the model output $y$ with the state $x$ and contains the partial derivatives of the output with respect to the states. 
% \begin{equation}\label{eq:observability_matrix_nl}
%     \mathcal{O}(x) = \begin{pmatrix}
%     \frac{\partial}{\partial x}y(t)\\
%     \frac{\partial}{\partial x}\dot{y}(t)\\
%     \frac{\partial}{\partial x}\ddot{y}(t)\\
%     \vdots\\
%     \frac{\partial}{\partial x}y^{(n_x-1)}(t)\\
%     \end{pmatrix}
% \end{equation}
% \begin{theorem}[Observability rank condition]\label{th:ORC}
% If $\mathcal{O}(x_0)$ has defined in \eqref{eq:observability_matrix_nl} has full rank than it is (locally) observable in a neighborhood of $x_0$\cite{khalil2002nonlinear}.
% \end{theorem}
% To extend this concept to identifiability, 
The main idea \edit{of identifiability} consists in considering the parameters $\theta$ as additional states with zero dynamics. Hence, the parameter-augmented state and the dynamics become \newedit{$\tilde{x} = [x, \theta]\in \R^{n_{\tilde{x}}}$, with ${n_{\tilde{x}} = n_x+n_{\theta}}$} and
\begin{equation}
\label{eq:aug_state}
    \tilde{x}(t) = \begin{bmatrix}
    x(t)\\
    \theta
    \end{bmatrix}
    \qquad \text{and} \qquad
    \left\lbrace
    \begin{array}{l}
         \widetilde{\dot{x}}(t) = f(\tilde{x}(t),u(t))  \\
         y(t) = g(\tilde{x}(t),u(t))\\
         \tilde{x}(0) = \tilde{x}_0=[x_0, \theta]^T.
    \end{array}
    \right.
\end{equation}

In the general case of a \edit{nonlinear system} with time-varying input, to identify the parameters we need to take into account the changes to the output due to the changing input in the augmented state. Thus we use the extended Lie derivatives. The extended Lie derivatives can, then, be used to build the rows of the observability-identifiability matrix $\mathcal{O}_I$~\cite{KARLSSON2012941,Villaverde2019input}\edit{, whose rank is to be evaluated for the identifiability analysis}. The Lie derivative of $g(\tilde{x}(t), u(t))$ in the direction of $f(\tilde{x}(t), u(t)) $ is given by:

\begin{equation}\label{eq:lie_derivative}
    L_f g (\tilde{x},u) = \dfrac{\partial g (\tilde{x}, u)}{\partial \tilde{x}}f(\tilde{x},u) + \sum\limits_{j=0}^{j=\infty} \dfrac{\partial g (\tilde{x}, u)}{\partial u^{(j)}}u^{(j+1)}
\end{equation}
where $u^{(j)}$ is the $j^{th}$ time derivative of the input $u$. The Lie derivatives of higher order are:
\begin{equation}\label{eq:lie_derivative_high_order}
    L_f^i g (\tilde{x},u) = \dfrac{\partial L_f^{i-1} g (\tilde{x}, u)}{\partial \tilde{x}}f(\tilde{x},u) + \sum\limits_{j=0}^{j=\infty} \dfrac{\partial L_f ^{i-1} g (\tilde{x}, u)}{\partial u^{(j)}}u^{(j+1)}.
\end{equation}
% Since the output depends on the input but not on its derivative the infinite summation in \eqref{eq:lie_derivative} can be truncated at $j=0$ and the same  for \eqref{eq:lie_derivative_high_order} which becomes:
% \begin{equation}\label{eq:lie_derivative_reduced}
%  L_f^i g (\tilde{x},u) = \dfrac{\partial L_f^{i-1} g (\tilde{x}, u)}{\partial \tilde{x}}f(\tilde{x},u) + \sum\limits_{j=0}^{j=1} \dfrac{\partial L_f ^{i-1} g (\tilde{x}, u)}{u^{(j)}}u^{(j+1)}
% \end{equation}
The observability-identifiability matrix $\mathcal{O}_I (\tilde{x},u)$ for a general nonlinear system~\eqref{eq:aug_state} becomes 
\begin{equation}\label{eq:observability_matrix}
    \mathcal{O}_I (\tilde{x},u) = \begin{pmatrix}
    \frac{\partial}{\partial \tilde{x}} g (\tilde{x}, u)\\
    \frac{\partial}{\partial \tilde{x}} (L_f g (\tilde{x}, u))\\
    \frac{\partial}{\partial \tilde{x}} (L_f^2 g (\tilde{x}, u))\\
    \vdots\\
    \frac{\partial}{\partial \tilde{x}} (L_f^{n_{\tilde{x}}-1} g (\tilde{x}, u))\\
    \end{pmatrix}
\end{equation}

Finally, a nonlinear observability-identifiability condition can be used to infer the structural local identifiability of~\eqref{eq:aug_state}:
\begin{theorem}[Nonlinear Observability – Identifiability Condition (OIC)]\label{theorem:oic}
If a model \eqref{eq:aug_state} satisfies $\mathrm{rank}(\mathcal{O}_I (\tilde{x}_0,u)) = n_x+n_{\theta}$, with $\mathcal{O}_I (\tilde{x}_0,u)$ given by \eqref{eq:observability_matrix} and $\tilde{x}_0$ being a (possibly generic) point in the augmented state space, the model is locally observable and locally structurally identifiable in a neighborhood $\mathcal{N}(\tilde{x}_0)$ of $\tilde{x}_0$.
\end{theorem}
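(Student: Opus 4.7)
The plan is to realize Theorem \ref{theorem:oic} as a direct application of the Hermann--Krener observability rank condition to the augmented system in \eqref{eq:aug_state}. Since the parameters $\theta$ have been appended as states with zero dynamics, structural local identifiability of $\theta$ at $x_0$ is the same assertion as local observability of $\tilde{x}$ at $\tilde{x}_0$, so the whole argument reduces to proving a nonlinear observability theorem for $\tilde{x}$. I would follow the classical strategy: construct an explicit finite-jet observation map from augmented initial conditions to output derivatives, show its Jacobian is precisely $\mathcal{O}_I(\tilde{x}_0,u)$, and conclude local injectivity via the inverse function theorem.

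More concretely, I would first exploit the analyticity of $f$ and $g$ and the smoothness of admissible inputs (the polynomial-input assumption in the second remark) to expand $y(t) = g(\tilde{x}(t), u(t))$ as a Taylor series in $t$ around $t=0$. A straightforward induction, using the chain rule together with the definition of the extended Lie derivatives in \eqref{eq:lie_derivative}--\eqref{eq:lie_derivative_high_order}, shows that the $i$th Taylor coefficient of $y(t)$ is exactly $L_f^i g$ evaluated at $(\tilde{x}_0, u(0), u'(0), \ldots)$. I would then define the finite observation map
\begin{equation*}
    \Phi(\tilde{x}_0) \;=\; \bigl(g,\; L_f g,\; L_f^2 g,\; \ldots,\; L_f^{n_{\tilde{x}}-1} g\bigr)\bigl|_{\tilde{x}=\tilde{x}_0}.
\end{equation*}
By inspection of \eqref{eq:observability_matrix}, the Jacobian of $\Phi$ with respect to $\tilde{x}_0$ is exactly the observability--identifiability matrix $\mathcal{O}_I(\tilde{x}_0,u)$. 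The hypothesis that this matrix has full rank $n_x+n_{\theta}$ means $\Phi$ is an immersion at $\tilde{x}_0$, hence a local diffeomorphism onto its image by the inverse function theorem. Therefore, in a neighborhood $\mathcal{N}(\tilde{x}_0)$, any $\tilde{x}_0^\ast$ that reproduces the same first $n_{\tilde{x}}$ Taylor coefficients of the output must equal $\tilde{x}_0$; restricted to the $\theta$-block this yields $\theta=\theta^\ast$, which is precisely the implication required in Definition \ref{def:struct_loc_ident}.

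The main obstacle will be rigorously bridging ``$y(t,\theta,x_0,u) \equiv y(t,\theta^\ast,x_0,u)$ for all $t>0$'' with equality of the first $n_{\tilde{x}}$ Lie derivatives at $t=0$. For analytic $f,g$ and analytic admissible inputs, identity of the output functions forces identity of every Taylor coefficient, so the first $n_{\tilde{x}}$ coefficients agree a fortiori; this is clean only once one has verified that the admissible input class is regular enough for the extended Lie derivatives to actually represent those coefficients, which is what motivates the polynomial-input hypothesis. A secondary subtlety is the ``almost all'' qualifier in Definition \ref{def:struct_loc_ident}: since the minors of $\mathcal{O}_I$ are analytic functions of $\tilde{x}$, the full-rank set is either open and dense or empty, so checking the condition at a generic $\tilde{x}_0$ certifies identifiability away from a measure-zero exceptional locus, consistent with the genericity caveat in the first remark. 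I do not expect Cayley--Hamilton-type termination of the Lie-derivative sequence beyond index $n_{\tilde{x}}-1$ to cause trouble, since the conclusion is purely local and only requires $\Phi$ (of that fixed length) to be an immersion.
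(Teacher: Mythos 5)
The paper does not actually prove Theorem~\ref{theorem:oic}; it imports it as a known result from the differential-geometric structural-identifiability literature (the Hermann--Krener observability rank condition extended to inputs and to the parameter-augmented state). Your sketch reproduces the standard proof of exactly that result --- Taylor-expanding the output, identifying the coefficients with the extended Lie derivatives, and applying the rank/inverse-function theorem to the jet map $\Phi$ whose Jacobian is $\mathcal{O}_I(\tilde{x}_0,u)$ --- and it is correct in outline and consistent with the argument in the cited references.
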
 %\Dan{interesting but you probably already got there before me. You could have a system where the rank is not full, but the corresponding column is a state variable? i.e., you can ID the parameters but not observe the state? No action needed, just a curiosity}\Yanbing{it is possible, but it does not prohibit identifiability analysis. i.e., some state(s) can be unobservable but all parameters are identifiable, STRIKEGOLDD can tell what exact state/parameter that are unobservable/unidentifiable.}

% It should be noted that $\mathcal{O}_I (\tilde{x}_0,u)$ may have full rank even when built with less than $n_x+n_{\theta} -1$ Lie derivatives. In practice, this means that it is often more efficient to build $\mathcal{O}_I (\tilde{x}_0,u)$ recursively
% and calculate $rank \mathcal{O}_I (\tilde{x}_0,u)$ after adding each Lie derivative, which allows for an early termination of the procedure if full rank is achieved (in which case the OIC is fulfilled) or if the rank stops increasing (the OIC is not fulfilled).
% If the OIC does not hold, there is at least one unobservable state (or an unidentifiable parameter). They can be found by
% removing each of the columns of $\mathcal{O}_I (\tilde{x}_0,u)$ and recalculating its rank If the rank does not change after removing the $i$-th column, the $i-th$ variable (which may be a parameter or a state) is structurally unidentifiable (or unobservable).
\begin{remark}
The differential geometry approach yields results that are
valid almost everywhere, i.e., for all values of the system variables (initial conditions and inputs) except for a set of measure zero.
\end{remark} 

Throughout the rest of the paper, we focus on analyzing structural local identifiability for almost all initial conditions in the augmented initial state $\tilde{x}_0$, i.e., we compute $\mathcal{O}_I (\tilde{x}_0,u)$ symbolically with $\tilde{x}_0=[x_0, \theta]^T$. We also explore the special cases for $\tilde{x}_0$ (the initial conditions and parameters that belong to a set of measure zero) such that structural identifiability is lost. More details of the setup and analysis are presented in Section~\ref{sec:structural_id_results}.
% This allows us to explore the effect of the initial conditions on parameter identifiability, as mentioned in~\cite{wu2008parameter} that knowing initial conditions is equivalent to having more observations on system output, thus improves the parameter estimation reliability. 
%\Dan{and the bottom part here confuses me too. So far the definitions and analysis above has not mentioned anything about the IC being known. So the reference to knowing the IC here catches me off guard }\Yanbing{I agree. I rephrased this pragraph}

\subsection{Numerical direct test}

% It is important to note that the key factor to local structural identifiability using the \textit{differential geometry approach} is the rank of the symbolic matrix $\mathcal{O}_I (\tilde{x}_0,u)$, which is realized by symbolic calculator such as MATLAB\textsuperscript{\tiny\textregistered}. The symbolic rank calculator gives an exact calculation and does not consider simplification of, for example, special functions~\cite{Matlabsyms}. It assumes non-zero and independent symbolic variables with no specific numerical ranges, which is not always true for physics models, and could lead to over-optimistic rank calculation (i.e., substituting specific numerical values or dependency amongst symbolic variables could lead to a degenerate matrix). Therefore, $\mathcal{O}_I (\tilde{x}_0,u)$ not full rank implies an unidentifiable system, but a full rank $\mathcal{O}_I (\tilde{x}_0,u)$ only means theoretically identifiable and cannot prove that the parameters are always identifiable under all initial conditions. This is not the drawback of the method itself, but rather the method is limited to testing local identifiability, which considers only the fundamental structural of the model without specifying numerical ranges and dependencies of initial conditions and the parameters. To find such special situations that cause degenerate matrices (or unidentifiable parameters), numerical tools should be considered.

The \textit{direct test} is a conceptually straightforward way to test identifiability considering a fully-defined system (i.e., the given initial condition and input trajectory are known). The direct test offers analysis that differs from structural identifiability in that it looks for the worst-case unidentifiable parameters in the following sense. It poses the identifiability problem as for finding the maximally distinct (as specified by an objective function) parameters $\theta_1$ and $\theta_2$ such that the output of two systems (which are identical except for the parameters) differ by no more than a threshold $\epsilon$ (possibly equal to zero). It is an idea first proposed in~\cite{walter2004guaranteed} as a numerical alternative to algebraic computation.  

The generic form of the direct test reads:  
% , which often fails to detect unidentifiable cases when the initial condition or parameters fall into a set of measure zero~\cite{saccomani2003parameter,Villaverde2017initial}. 

\begin{equation}
\label{eq:opt2}
\begin{aligned}
& \underset{\theta_1,\theta_2\in \Theta}{\text{maximize}}
& & d(\theta_1, \theta_2) \\
& \text{subject to}
& & e(\theta_1, \theta_2)\leq\epsilon,
% & & & \theta_{\text{min}} \leq \theta_1, \theta_2 \leq \theta_{\text{max}}
% & & & \theta_1, \theta_2 ,
\end{aligned}
\end{equation}
% \Yanbing{an alternative way to write the optimization problem}
% \begin{equation}
% \label{eq:opt2}
% \begin{aligned}
% & {\text{maximize}}
% & & d(\theta_1, \theta_2) \\
% & \text{subject to}
% & & e(\theta_1, \theta_2)=||y_k(k,\theta_1, u_k, x_k, x_0)-y_k(k,\theta_2, u_k, x_k, x_0)||_p\leq\epsilon,\\
% & & & \theta_{\text{min}} \leq \theta_1, \theta_2 \leq \theta_{\text{max}}
% \end{aligned}
% \end{equation}
where the objective function $d(\theta_1, \theta_2)$ is the distance between two parameters in the parameter space $\Theta$. The  constraint $e(\theta_1, \theta_2) \leq \epsilon$ caps the difference between the output of the system under $\theta_1$ and $\theta_2$ at $\epsilon$, which is a small, user-defined threshold of measurement error. In order to evaluate $e(\theta_1,\theta_2)$, one must solve the ODE~\eqref{eq:dyn_sys_general} for the same known initial condition and the same given input trajectory under the two parameters $\theta_1$ and $\theta_2$. This problem formulation finds the most distinct (as quantified by $d$) parameters such that the fully defined system produces similar outputs (or the same outputs when $\epsilon=0$). The difference $e(\cdot,\cdot)$ is only due to $\theta$ for a fully-defined system. Therefore $e(\cdot,\cdot)$ is a function of only two parameters $\theta_1$ and $\theta_2$.

% The following definition considers identifiability for a fully defined system, and allows measurement error tolerance $\epsilon$ to be taken into account in the practical identifiability (both $\epsilon$ and $\delta$ are user-defined quantities):
% \begin{definition}
% \label{def:walter_practical_id} $\delta$-identifiability at $\epsilon$: 
% The system~\eqref{eq:dyn_sys_general} with a specific and known initial condition and input trajectory is \textbf{$\boldsymbol{\delta}$-identifiable at $\boldsymbol{\epsilon}$}, for a small positive $\delta$ and $\epsilon$, if and only if
% \begin{equation}
%     \nexists \ \theta_1, \theta_2 \in \Theta \ \mathrm{such\ that}\  e(\theta_1,\theta_2)\leq \epsilon \ \mathrm{and}\  d(\theta_1,\theta_2)\geq \delta.
% \end{equation}
% \end{definition}

Denote the decision variables of~\eqref{eq:opt2} at optimality as $\theta_1^*,\theta_2^*$, and the objective function value at optimality as $\delta^*:=d(\theta_1^*,\theta_2^*)$.
% The above definition can be specified as following to relate to \textit{global identifiability}:
% \begin{definition}
% \label{def:eta_global_id}
% The system given by~\eqref{eq:dyn_sys_general} is \textbf{$\boldsymbol{\delta}$-identifiable at $\boldsymbol{\epsilon}$} if no two parameters can be found that are no more than $\delta$ apart while resulting in outputs that are closer than $\epsilon$, i.e.,
% \begin{equation}
%     e(\theta_1, \theta_2) \leq \epsilon \Rightarrow d(\theta_1, \theta_2) \leq \delta.
% \end{equation}
% \end{definition}
% \begin{definition}
% \label{def:eta_global_id}
% The system given by~\eqref{eq:dyn_sys_general} is \textbf{$\boldsymbol{\delta}^*$-identifiable at $\boldsymbol{\epsilon}$} if $\delta \leq \delta^*$, otherwise, the system is \textbf{$\boldsymbol{\delta}^*$-unidentifiable at $\boldsymbol{\epsilon}$}.
% , and \textbf{$\boldsymbol{\delta}^*$-unidentifiable at $\boldsymbol{\epsilon}$} if $\delta>\delta^*$.
% \end{definition}
% Naturally, under the tightest constraint $\epsilon=0$, all parameters in $\Theta$ for system~\eqref{eq:dyn_sys_general} with a specific and known initial condition and input trajectory produce a unique output if and only if $\delta^*=0$. 
Naturally, under the tightest constraint $\epsilon=0$, all parameters in $\Theta$ for system (1) with a specific and known initial condition and input trajectory produce a unique output if and only if $\delta^*=0$. When this occurs, we say that the system~\eqref{eq:dyn_sys_general} with the prescribed initial and input condition is identifiable; the same system is unidentifiable under the same initial and input conditions \edit{if $\delta^*>\delta$, where $\delta$ is a small positive number that is problem-specific.}

When measurement error is allowed, we consider a relaxed constraint, $\epsilon>0$. If $\delta^*$ is small, then we say the system is practically identifiable, i.e., similar outputs can be produced only by parameters that are similar. On the other hand, if $\delta^*$ is large, we found two distinct parameters that generate similar outputs, indicating that the same system is practically unidentifiable. 

Results on the change of $\delta^*$ as $\epsilon$ changes provides insights on the sensitivity of identifiability with respect to the measurement error, and will be presented in the numerical experiment (Section~\ref{sec:practical_id_results}).

\section{Structural identifiability analysis}
\label{sec:structural_id_results}

In this section, we analyze structural local identifiability of the car-following system~\eqref{eq:dyn_sys_CF} under the various car following models in Section~\ref{sec:preliminaries}. The analysis reveals that all the systems are structurally locally identifiable, i.e., it is theoretically possible to uniquely infer the unknown parameters under any admissible input and almost \newedit{all} initial conditions. We also explore the specific initial conditions (those belonging to a set of measure zero) such that each model becomes unidentifiable.

The organization of this section is the following. First we give an overview of the analysis, including the toolbox for implementing the differential geometry analysis for a given car following model, and interpret the results of the analysis. Next, we provide the structural local identifiability analysis for the CTH-RV~\eqref{eq:CTH-RV} model in detail, by showing the explicit observability-identifiability matrices $\mathcal{O}_I (\tilde{x}_0,u)$ under both a generic and specific initial conditions, and a set of admissible inputs for which the system is structurally identifiable. The structural local identifiability results of the other models (\eqref{eq:bando}, \eqref{eq:FTL} and \eqref{eq:Enhanced_IDM}) are then summarized. 

\subsection{Implementation overview}
% The observability-identifiability matrix $\mathcal{O}_I (\tilde{x}_0,u)$ determines whether a system is s.l.i. according to Theorem~\ref{theorem:oic}. Note that taking the Lie Derivative around the augmented initial condition $\tilde{x}_0$ allows us to explore the effect of the known initial condition on identifiability. The rank of $\mathcal{O}_I (\tilde{x}_0,u)$ directly tells structural identifiability results as well as finding the specific parameter(s) that are s.u. if there exists any.

 We deploy a direct implementation of the framework for structural identifiability through \textit{STRuctural Identifiability taKen as Extended-Generalized Observability with Lie Derivatives and Decomposition} (STRIKE-GOLDD)~\cite{villaverde2016, villaverde2019full}, an open-source MATLAB toolbox that computes the observability-identifiability matrix $\mathcal{O}_I (\tilde{x}_0,u)$ and analyzes the local structural parameter identifiability, state observability, and input reconstructability of nonlinear dynamic models of ODEs. Because only parameter identifiability is concerned in this paper as the states and the inputs can be directly measured, we use the toolbox solely for the purpose of identifiability. 

In addition, we use STRIKE-GOLDD to also determine \textit{a priori} the \edit{minimum degree $n$ of a polynomial input $u(t)$ to enable a structurally identifiable system~\cite{Villaverde2019input}. For example, if $\mathcal{O}_I$ has full rank under $u^{(n)}(t)=0$ but not $u^{(n-1)}(t)=0$, then an input $u(t)$ of degree $n$ or above is sufficient for structural identifiability.  Otherwise, $u(t)$ with degree higher than $n$ should be explored. }

For structurally unidentifiable systems, no admissible input exists. For structurally identifiable systems, the \edit{degree $n$ (i.e., number of non-zero time derivatives)} for an input to be admissible depends on the initial condition. We explore the admissible input condition for structurally identifiable car-following systems under both generic initial conditions and a special set of initial conditions where higher-order input may be required to be admissible.

\subsection{Structural local identifiability analysis for CTH-RV}
Recall CTH-RV ODE~\eqref{eq:CTH-RV}. The parameter-augmented system dynamics can be written as
\begin{equation}
    \label{eq:CTHRV_augmented}
    \left\lbrace
    \begin{array}{l}
         \widetilde{\dot{x}}_1(t) = \dot{s}(t) = u(t)-v(t) \\
         \widetilde{\dot{x}}_2(t) = \dot{v}(t) = k_1(s(t)-\tau v(t))+k_2(u(t)-v(t))\\
         \widetilde{\dot{x}}_{3-5}(t) = \dot{\theta}(t) = 0\\
         y(t) = s(t)\\
         \tilde{x}(0) = [x_0, \theta]^T.
    \end{array}
    \right.
\end{equation}
The following propositions and proofs show the structural identifiability of CTH-RV from the calculated observability-identifiability matrices.

\begin{proposition}\label{prop1}
According to Definition~\ref{def:struct_loc_ident}, the CTH-RV system~\eqref{eq:CTHRV_augmented} is structurally locally identifiable under almost all initial condition $x_0$ and an admissible input $u(t)$ \edit{with degree $n\geq0$} up to a set of measure zero.
\end{proposition}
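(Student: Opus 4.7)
The plan is to apply the Nonlinear Observability--Identifiability Condition (Theorem~\ref{theorem:oic}) to the parameter-augmented CTH-RV system~\eqref{eq:CTHRV_augmented}, whose augmented state is $\tilde{x}=(s,v,k_1,k_2,\tau)^T\in\R^5$ (so $n_{\tilde x}=5$) and whose scalar output is $y=s$. It suffices to assemble the observability--identifiability matrix $\mathcal{O}_I(\tilde{x}_0,u)$ from~\eqref{eq:observability_matrix} and show that at a generic point $(\tilde{x}_0,u)$ it attains full rank $5$; by Theorem~\ref{theorem:oic} this delivers structural local identifiability on a neighborhood of $\tilde{x}_0$, and the measure-zero exceptional set in the statement is precisely the complement where $\mathcal{O}_I$ drops rank.

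First I would compute the extended Lie derivatives of $y$ using~\eqref{eq:lie_derivative}--\eqref{eq:lie_derivative_high_order}: $L_f y = u-v$ and $L_f^2 y = \dot u - k_1(s-\tau v) - k_2(u-v)$, and iterate to obtain polynomial expressions for $L_f^3 y$ and $L_f^4 y$ in $(s,v,k_1,k_2,\tau)$ and $(u,\dot u,\ddot u,\dddot u)$. This step is mechanical; I would carry it out with the STRIKE-GOLDD toolbox to avoid bookkeeping errors. Because the first two rows of $\mathcal{O}_I$ are $(1,0,0,0,0)$ and $(0,-1,0,0,0)$ with zeros in the last three columns, a block expansion shows that $\det\mathcal{O}_I$ equals, up to sign, the $3\times 3$ minor
\begin{equation*}
    M(\tilde{x}_0,u)=\frac{\partial(L_f^2 y,\,L_f^3 y,\,L_f^4 y)}{\partial(k_1,k_2,\tau)}\bigg|_{(\tilde{x}_0,u)}.
\end{equation*}
Proposition~\ref{prop1} thus reduces to verifying that $\det M$ is not the zero polynomial in $(s_0,v_0,k_1,k_2,\tau,u_0,\dot u_0,\ddot u_0,\dddot u_0)$. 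As a concrete anchor, the row of $M$ coming from $L_f^2 y$ is $\bigl(-(s_0-\tau v_0),\,-(u_0-v_0),\,k_1 v_0\bigr)$, which is already nonzero at a generic state; one then confirms symbolically that no cancellation in the third and fourth Lie derivatives forces $\det M\equiv 0$.

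To address the ``degree $n\geq 0$'' clause, I would specialize $M$ by setting $\dot u=\ddot u=\dddot u=0$ and verify that the resulting polynomial in $(s_0,v_0,u_0,k_1,k_2,\tau)$ is still nonzero, so that even a constant input ($n=0$) excites the system enough at a generic initial condition. The measure-zero exceptional set then includes, most conspicuously, the equilibrium manifold $s_0=\tau u_0,\;v_0=u_0$: there both $s-\tau v$ and $u-v$ vanish, killing two entries of the $L_f^2 y$ row and further entries of the higher Lie-derivative rows, so $\det M=0$ and identifiability is lost, consistent with the ``up to a set of measure zero'' qualifier. The main obstacle is the polynomial-identity step: although individual Lie derivatives are routine, one must certify that no algebraic cancellation among the three rows makes $\det M$ vanish identically, and then characterize the exceptional zero set explicitly enough to justify the ``almost all'' clause in the statement.
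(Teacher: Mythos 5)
Your plan follows essentially the same route as the paper's proof: compute the four extended Lie derivatives of $y=s$, assemble the $5\times 5$ matrix $\mathcal{O}_I(\tilde{x}_0,u)$, establish generic full rank by symbolic/numeric computation (the paper likewise concedes the determinant is hard to handle analytically and exhibits a numerical point of rank $5$ in Appendix~\ref{appendix:example_rank5}), specialize to constant input to get the $n\geq 0$ clause via the result of~\cite{Villaverde2019input}, and invoke Theorem~\ref{theorem:oic}. Your reduction of $\det\mathcal{O}_I$ to the $3\times 3$ Jacobian minor in $(k_1,k_2,\tau)$ is a clean simplification the paper does not state explicitly, but it does not change the substance of the argument.
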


\begin{proof}
Consider \newedit{almost all} initial state conditions $x_0 =  [s_0, v_0]^T$. We prove that input $u(t)$ \edit{with degree} $n\geq0$ is admissible for structural local identifiability, i.e., $\mathcal{O}_I (\tilde{x}_0,u)$ is full rank with constant input $u(t)=u_0$. This implies that $\mathcal{O}_I (\tilde{x}_0,u)$ for $n>0$ is also full rank~\cite{Villaverde2019input}.
% \begin{equation}
%     \label{eq:ics}
%     x_0 = x(0) =  [s_0, v_0]^T,
% \end{equation}
% and a constant input
% \begin{equation}
%     u(t) = u_0 \in\mathcal{C}^{0}.
% \end{equation}

Constructing the observability-identifiability matrix $\mathcal{O}_I (\tilde{x}_0,u)$ requires at least $n_{\tilde{x}}-1=4$ extended Lie Derivatives, where $n_{\tilde{x}}=5$ is the dimension of the augmented state. The four extended Lie Derivatives are:
\begin{equation}\label{eq:lie_derivatives_proof1}
    \begin{aligned}
        L_f^1 g (\tilde{x},u) &=u(t)-v(t)\\
        L_f^2 g (\tilde{x},u) &=k_2(v(t) - u(t)) - k_1(s(t) - \tau v(t))\\
        L_f^3 g (\tilde{x},u) &=k_1(v(t) - u(t)) - (k_2 + k_1\tau)(k_2(v(t) - u(t)) - k_1(s(t) - \tau v(t)))\\
        L_f^4 g (\tilde{x},u) &=-(k_1 - (k_2 + k_1\tau)^2)(k_2*(v(t) - u(t))- k_1(s(t) - \tau v(t)))\\
        &- k_1(k_2 + k_1\tau)(v(t) - u(t)).
    \end{aligned}
\end{equation}
Consequently, $\mathcal{O}_I (\tilde{x}_0,u)$ for system~\eqref{eq:CTHRV_augmented} is a 5$\times$5 matrix:%\Yanbing{i commented out some details. They are implementation details that we don't need to say}
% \Dan{this also looks odd at first glance, is it a tensor or a matrix? i.e., not clear what the parenthesis mean here; one times looks like two dimensions and the other one looks like, multiply these numbers together} can be calculated using~\eqref{eq:observability_matrix}, where $n_y=1$ is the dimension of the measurement, and $n_d=\ceil{n_{\tilde{x}}/n_y}=5$ \Dan{this just seems weird to me, i.e., the formula is not obviously generalizable. is nd always the ceiling of nx/ny? And why do I divide and take the ceiling?}. For system~\eqref{eq:CTHRV_augmented}, $\mathcal{O}_I (\tilde{x}_0,u)\in \R^{5 \times 5}$ is calculated as:
\begin{equation}
\label{eq:O_1}
\begin{aligned}
    &\mathcal{O}_I (\tilde{x}_0,u) = \\
    &\begin{bmatrix}
    1 & 0 & 0 & 0 & 0\\
    0 &-1 & 0 & 0 & 0\\
    -k_1 & k_2+k_1\tau & \tau v_0-s_0 & v_0-u_0 & k_1 v_0\\
    -k_1(k_2+k_1\tau) & (k_2+k_1\tau)^2-k_1 & o_{43} & o_{44}& o_{45}&\\
    o_{51}& o_{52}& o_{53}&o_{54}& o_{55}&
    \end{bmatrix}
    \end{aligned}
\end{equation}
with specific entries defined as follows:
\begingroup
\allowdisplaybreaks
\begin{align*}
o_{43} &= v_0-u_0+(k_2+k_1\tau)(s_0-\tau v_0)-\tau(k_2(v_0-u_0))-k_1(s_0-\tau v_0) \\
o_{44} &= k_1(s_0 - \tau v_0) - k_2(v_0 - u_0) - (k_2 + k_1\tau)(v_0 - u_0)\\
o_{45} &=- k_1 (k_2 (v_0 - u_0) - k_1 (s_0 - \tau v_0)) - k_1 v_0 (k_2 + k_1 \tau)\\
o_{51} & =k_1 (k_1 - (k_2 + k_1 \tau)^2)\\
o_{52} & =- k_1 (k_2 + k_1 \tau) - (k_2 + k_1 \tau) (k_1 - (k_2 + k_1 \tau)^2)\\
o_{53} &=(s - \tau v_0) (k_1 - (k_2 + k_1 \tau)^2) - (k_2 + k_1 \tau) (v_0 - u_0) + (2 \tau (k_2 + k_1 \tau) - 1) \\&(k_2 (v_0 - u_0) - k_1 (s_0 - \tau v_0)) - k_1 \tau (v_0 - u_0)\\
o_{54} & =(2 k_2 + 2 k_1 \tau) (k_2 (v_0 - u_0) - k_1 (s_0- \tau v_0)) - \\
&k_1 (v_0 - u_0) - (k_1 - (k_2 + k_1 \tau)^2) (v_0 - u_0)\\
o_{55} &= k_1^2 (v_0 - u_0) - k_1 v_0 (k_1 - (k_2 + k_1 \tau)^2).
\end{align*}
\endgroup
\edit{Analytically, it is difficult to prove that the matrix $\mathcal{O}_I (\tilde{x}_0,u)$ is full rank. A symbolic calculator such as MATLAB can provide a general case: the matrix in general is not rank deficient (see an example in Appendix~\ref{appendix:example_rank5}). In this case, the system is structurally locally identifiable according to Theorem~\ref{theorem:oic}.} Since constant input $u(t) = u_0$ is admissible, $u(t)$ \edit{with degree} $n\geq0$ is also admissible according to~\cite{Villaverde2019input}.
\end{proof}

However, Theorem~\ref{theorem:oic} provides a general result that is valid for all values of the state and parameters except for a set of measure zero. \newedit{It is possible that for some special initial conditions the matrix is not full rank. The symbolic calculator leads to generic conclusions on the rank of a matrix and fails to capture a specific set (of measure zero). Therefore, it is possible to categorize the system as locally structurally identifiable whereas for special initial conditions the result is uninformative.}
% For some special initial conditions that belong to such a set, the identifiability result may be misleading (differential geometry will misleadingly categorize the system as structurally locally identifiable whereas it is actually structurally unidentifiable). 
The following proposition gives an example of special initial condition that leads to unidentifiable CTH-RV system:
\begin{proposition}
\label{prop2}
The CTH-RV system~\eqref{eq:CTHRV_augmented} is unidentifiable given an equilibrium initial \newedit{condition~\eqref{eq:ics_2}} and constant input $u(t)=u_0$.
\end{proposition}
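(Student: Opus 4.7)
The plan is to substitute the equilibrium initial condition $x_0^* = [\tau u_0,\, u_0]^T$ together with the constant input $u(t)\equiv u_0$ into the observability-identifiability matrix $\mathcal{O}_I(\tilde x_0,u)$ of~\eqref{eq:O_1}, already constructed in the proof of Proposition~\ref{prop1}, and to show it is rank-deficient. The key observation is that at equilibrium both $s_0-\tau v_0=0$ and $v_0-u_0=0$. A direct inspection of the explicit entries of columns~3 and~4 of $\mathcal{O}_I$ (the partial derivatives with respect to $k_1$ and $k_2$, respectively) shows that every term appearing in those columns is a sum of products each containing at least one factor of $(s_0-\tau v_0)$ or $(v_0-u_0)$. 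Hence these two columns vanish identically, forcing $\mathrm{rank}(\mathcal{O}_I(\tilde x_0^*, u_0))\leq 3<5$, so the sufficient condition of Theorem~\ref{theorem:oic} fails at $\tilde x_0^*$.

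To strengthen ``OIC fails'' into a bona fide proof of non-identifiability in the sense of Definition~\ref{def:struct_loc_ident}, I would then exhibit two distinct parameter vectors producing identical outputs. Evaluating the right-hand side of~\eqref{eq:CTHRV_augmented} at $x_0^*$ with $u(t)=u_0$ gives $\dot s(0)=u_0-u_0=0$ and $\dot v(0)=k_1(\tau u_0-\tau u_0)+k_2(u_0-u_0)=0$, so by uniqueness of solutions the trajectory is stationary, $(s(t),v(t))\equiv(\tau u_0, u_0)$ for every $t\geq 0$. Because this output depends on neither $k_1$ nor $k_2$, for any $(k_1^*,k_2^*)\neq(k_1,k_2)$ the parameter vector $\theta^*=[k_1^*,k_2^*,\tau]$ produces exactly the same output as $\theta=[k_1,k_2,\tau]$, even within an arbitrarily small neighborhood of $\theta$. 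This directly contradicts~\eqref{eq:struct_glob_2}.

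The only genuine obstacle is bookkeeping: checking that the higher-order entries $o_{43},\,o_{44},\,o_{53},\,o_{54}$ of $\mathcal{O}_I$ truly do all contain the critical factors, as opposed to acquiring constant terms that could survive the equilibrium substitution. This amounts to routine symbolic manipulation that can be performed term-by-term or delegated to the same symbolic engine used in Proposition~\ref{prop1}. The direct ODE argument in the second paragraph bypasses this calculation entirely and has the additional virtue of pinpointing which parameters are lost: the unidentifiable directions in parameter space are those spanned by $k_1$ and $k_2$, whereas $\tau$ remains identifiable provided $u_0\neq 0$, since any perturbation of $\tau$ alone immediately pushes the system off equilibrium and generates a non-constant trajectory distinguishable from the equilibrium output.
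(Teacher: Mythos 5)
Your proposal is correct, and its first half is exactly the paper's argument: substituting $s_0-\tau v_0=0$ and $v_0-u_0=0$ into \eqref{eq:O_1} annihilates columns 3 and 4 (every entry there carries one of those two factors, as you suspected), giving $\mathrm{rank}(\mathcal{O}_I(\tilde x_0^*,u_0))=3<5$; the paper then reads off $k_1,k_2$ as the lost parameters by the column-deletion test. Where you genuinely depart from the paper is the second paragraph: the paper stops at the rank computation and declares the system unidentifiable, implicitly treating rank deficiency as sufficient for non-identifiability, whereas Theorem~\ref{theorem:oic} as stated is only a sufficient condition \emph{for} identifiability. Your fixed-point argument --- $\dot s(0)=\dot v(0)=0$ at $x_0^*$ under constant input, hence $y(t)\equiv\tau u_0$ independently of $(k_1,k_2)$, so distinct parameters violate the implication in Definition~\ref{def:struct_loc_ident} --- closes that logical gap with an elementary ODE observation, exhibits the non-identifiability constructively rather than infinitesimally, and recovers the same conclusion about which parameters are lost. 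The one small caveat: your claim that $\tau$ remains identifiable needs $k_1\neq 0$ in addition to $u_0\neq 0$ (perturbing $\tau$ gives $\ddot s(0)=-k_1u_0(\tau-\tau')$), which is consistent with the paper's column 5 being proportional to $k_1u_0$. Overall your version is a strict strengthening of the paper's proof.
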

% \begin{proof}
% The proof follows the same procedure as the one for Proposition 1. Due to the presence of a high-order polynomial $u(t)$ instead of a constant input $u_0$ as assumed in Proposition 1, a few entries of the calculated $\mathcal{O}_I (\tilde{x}_0,u)$ matrix differ from matrix~\eqref{eq:O_1}:
% \begingroup
% \allowdisplaybreaks
% \begin{align*}
% o_{44} &= k_1(s_0 - \tau v_0) - k_2(v_0 - u(t)) - (k_2 + k_1\tau)(v_0 - u(t))-\dot{u}(t)\\
% o_{53} &=(s_0 - \tau v_0) (k_1 - (k_2 + k_1 \tau)^2) - (k_2 + k_1 \tau) (v_0 - u(t)) + \\
% &(2 \tau (k_2 + k_1 \tau) - 1) (k_2 (v_0 - u(t)) - k_1 (s_0 +\dot{u}(t)(k_2\tau -1)\\
% &- \tau v_0)) - k_1 \tau (v_0 - u(t))\\
% o_{54} & =(2 k_2 + 2 k_1 \tau) (k_2 (v_0 - u(t)) - k_1 (s_0- \tau v_0)) - \\
% &k_1 (v_0 - u(t)) - (k_1 - (k_2 + k_1 \tau)^2) (v_0 - u(t))-\ddot{u}(t)+\dot{u}(2k_2+k_1 \tau)\\
% o_{55} &= k_1^2 (v_0 - u(t)) - k_1 v_0 (k_1 - (k_2 + k_1 \tau)^2)+k_1k_2\dot{u}(t).
% \end{align*}
% \endgroup
% Note that as compared to~\eqref{eq:O_1}, a few entries are added with derivatives terms of $u(t)$. The new $\mathcal{O}_I (\tilde{x}_0,u)$ matrix is still full rank. The intuition is that, if a model is identifiable with constant input, then the same model must be identifiable with a high-order polynomial input. Therefore we conclude that CTH-RV model is structurally locally identifiable to non-equilibrium initial conditions and a polynomial input with degree higher or equal to 1.

% \end{proof}

\begin{proof}
We prove that the system~\eqref{eq:CTHRV_augmented} is unidentifiable under constant input $u(t) = u_0$ with an equilibrium initial conditions specified as
\begin{equation}
    \label{eq:ics_2}
    x(0) = x_0^* = \begin{bmatrix}
    \tau u_0\\
     u_0
    \end{bmatrix}.
\end{equation}
where $u_0$ is the initial value of the lead vehicle velocity. The equilibrium initial condition results in $\dot{s}(0)=0$ and $\dot{v}(0)=f_{\text{CTHRV}}(u_0, x_0^*, \theta)=0$. The same four Lie Derivatives~\eqref{eq:lie_derivatives_proof1} will be used to calculate $\mathcal{O}_I (\tilde{x}_0,u)$. However, due to the specified initial conditions and the constant input condition, $\mathcal{O}_I (\tilde{x}_0,u)$ becomes:
\begin{equation}
\begin{aligned}
    &\mathcal{O}_I (\tilde{x}_0,u) = \\
    &\begin{bmatrix}
    1 & 0 & 0 & 0 & 0\\
    0 &-1 & 0 & 0 & 0\\
    -k_1 & k_2+k_1\tau & 0 & 0 & k_1 u_0\\
    k_1(k_2+k_1\tau) & k_1-(k_2+k_1\tau)^2 &0& 0& -k_1 u_0(k_2+k_1\tau)&\\
    o_{51}& o_{52}& 0&0& o_{55}&
    \end{bmatrix}
    \end{aligned}
\end{equation}
\begingroup
\allowdisplaybreaks
\begin{align*}
o_{51} &=k_1 (k_1 - (k_2 + k_1 \tau)^2)\\
o_{52} &=- k_1 (k_2 + k_1 \tau) - (k_2 + k_1 \tau) (k_1 - (k_2 + k_1 \tau)^2)\\
o_{55} &= -k_1u_0(k_1-(k_2+k_1\tau)^2).
\end{align*}
\endgroup
It is obvious that rank$(\mathcal{O}_I (\tilde{x}_0,u))=3$, and therefore this system is structurally unidentifiable under equilibrium initial conditions. Furthermore, the specific unidentifiable parameter(s) can be detected by removing each of the columns of $\mathcal{O}_I (\tilde{x}_0,u)$ and recalculating the rank. If the rank does not change after removing the $i$th column, than the corresponding $i$th variable in the augmented state $\tilde{x}$ is unidentifiable (or observable)~\cite{villaverde2019full}. It is straightforward to see that removing the 3rd or the 4th column of $\mathcal{O}_I (\tilde{x}_0,u)$ does not change the rank, and therefore, the corresponding parameters $k_1$ and $k_2$ are unidentifiable.
\end{proof}

\begin{corollary}
 Given an equilibrium initial condition $x_0^*$ for the CTH-RV system~\eqref{eq:CTHRV_augmented}, the admissible input required to enable structural identifiability is \edit{with degree} $n\geq 1$.
\end{corollary}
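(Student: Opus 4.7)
The plan is to combine Proposition~\ref{prop2} (which handles the ``only if'' direction for this particular initial condition) with a direct construction showing that an input with at least one non-zero time derivative restores full rank of $\mathcal{O}_I(\tilde{x}_0^*,u)$. Proposition~\ref{prop2} already establishes that $n=0$ (constant input) is not admissible at the equilibrium $x_0^*$ because the two rank-deficient columns of $\mathcal{O}_I$ correspond precisely to $k_1$ and $k_2$. So it remains to show that $n=1$ suffices; the statement for all $n\geq 1$ then follows from the monotonicity result cited in the proof of Proposition~\ref{prop1} (\cite{Villaverde2019input}: if some input of degree $n$ makes $\mathcal{O}_I$ full rank, so does any input of higher degree).

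Concretely, I would pick an input with $\dot u(0) \neq 0$ (e.g.\ $u(t)=u_0+u_1 t$ with $u_1\neq 0$) and recompute the extended Lie derivatives using the full formula~\eqref{eq:lie_derivative_high_order}, retaining the $\partial/\partial u^{(j)}\cdot u^{(j+1)}$ contributions that were dropped in~\eqref{eq:lie_derivatives_proof1}. The key observation is structural: at the equilibrium $x_0^*=[\tau u_0,u_0]^T$ both ``error'' terms $(s-\tau v)$ and $(u-v)$ vanish, which is exactly why the $k_1,k_2$ columns collapsed to zero in Proposition~\ref{prop2}. Once $\dot u$ is carried along symbolically, $L_f^2 g$ picks up an additive term $\dot u$, and differentiating $L_f^3 g$ with respect to $k_2$ yields a term $-\dot u$ that does \emph{not} vanish at $x_0^*$. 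Thus a non-trivial entry appears in the previously-zero $k_2$ column; an analogous term involving $\ddot u$ (or, if $\ddot u=0$, cross-products of $\dot u$ with the sensitivity coefficients $(k_2+k_1\tau)$) fills in the $k_1$ column when we pass to $L_f^4 g$.

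With these revised rows I would assemble the new $5\times 5$ matrix $\mathcal{O}_I(\tilde{x}_0^*,u)$ evaluated at the equilibrium and the chosen input, and verify full rank symbolically, exactly as in Proposition~\ref{prop1}. Applying Theorem~\ref{theorem:oic} then gives structural local identifiability under $u$ of degree one; invoking the monotonicity of admissibility in the input degree concludes that every $n\geq 1$ is admissible, while Proposition~\ref{prop2} rules out $n=0$, proving the corollary.

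The main obstacle I anticipate is purely computational: the extended Lie derivatives become algebraically heavy once the $u^{(j)}$-dependence is tracked, and one must be careful not to drop the $\partial/\partial u^{(j)}$ terms that were implicitly zero under a constant input. A secondary subtlety is to argue cleanly that the \emph{specific} rank-restoring entries survive cancellation at $x_0^*$; this is best handled by identifying which columns of the matrix in Proposition~\ref{prop2} were zero and showing directly that the corresponding new entries—coming from $\dot u(0)$ (and if needed $\ddot u(0)$)—are non-zero in a generic parameter regime, so that the two previously-null columns become linearly independent from the others. Once this is in place, a symbolic rank check finishes the argument.
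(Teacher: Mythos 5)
Your proposal matches the paper's own argument: the paper likewise rules out $n=0$ via Proposition~\ref{prop2}, then recomputes the extended Lie derivatives and the rank of $\mathcal{O}_I$ with $\ddot u = 0$ but $\dot u \neq 0$, finds full rank symbolically, and concludes admissibility for all degrees $n\geq 1$. Your additional bookkeeping about which entries repopulate the $k_1$ and $k_2$ columns is a finer-grained account of the same symbolic rank check, not a different route.
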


In the case of equilibrium initial condition, a constant input is not admissible to enable structural identifiability. Therefore, time-varying input may be needed to excite the system. We check the order of the input required by replacing the higher order derivatives of $u(t)$ in $\mathcal{O}_I$ with zero and recalculating rank$(\mathcal{O}_I)$. This procedure gives a full rank $(\mathcal{O}_I)$ when $\ddot{u}=0$, as long as $\dot{u}\neq 0$. Therefore an input \edit{with degree} $n\geq 1$ is admissible for structural identifiability of CTH-RV, even \edit{if} the system starts from an equilibrium initial condition.

Another unidentifiable case that \textit{differential geometry} fails to detect constitutes a special relationship between the initial condition and the parameters. As long as such relationship is established, there exists no admissible input, and the initial conditions need not to be at equilibrium. This case is demonstrated below.
\begin{proposition}
\label{prop3}
Given a generic initial condition $x_0=[s_0, v_0]^T$ for the CTH-RV system~\eqref{eq:CTHRV_augmented}, if $\tau=\frac{s_0}{v_0}$ and $k_2=\frac{v_0}{s_0}$, no admissible input exists.
\end{proposition}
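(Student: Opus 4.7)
The plan is to exhibit an invariant manifold for the CTH-RV dynamics on which the parameter $k_1$ plays no role, and then invoke Remark 2. First, I would introduce the auxiliary quantity $z(t) := s(t) - \tau v(t)$ and use the augmented dynamics~\eqref{eq:CTHRV_augmented} to compute
\begin{equation*}
\dot z(t) = \dot s(t) - \tau \dot v(t) = \bigl(u(t) - v(t)\bigr) - \tau\bigl[k_1 z(t) + k_2(u(t) - v(t))\bigr] = -\tau k_1\, z(t) + (1 - \tau k_2)\bigl(u(t) - v(t)\bigr).
\end{equation*}
Under the hypothesis $\tau = s_0/v_0$ and $k_2 = v_0/s_0$, the product $\tau k_2$ equals $1$, so the input-driven forcing term vanishes, and the initial condition satisfies $z(0) = s_0 - \tau v_0 = 0$. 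Uniqueness of solutions to the resulting homogeneous linear ODE $\dot z = -\tau k_1\, z$ with $z(0)=0$ then gives $z(t) \equiv 0$ for every $t \geq 0$ and every input $u(t)$.

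Second, since $s(t) \equiv \tau v(t)$ along the trajectory, the velocity dynamics reduce to $\dot v(t) = k_2(u(t) - v(t))$, which does not contain $k_1$ at all. Consequently, any two CTH-RV parameter vectors $\theta = (k_1, k_2, \tau)$ and $\theta^* = (k_1^*, k_2, \tau)$ with $k_1 \neq k_1^*$ arbitrary produce identical state trajectories, and hence identical outputs $y(t)$, from the initial condition $x_0$ under every input. This violates the implication~\eqref{eq:struct_glob_2} in every neighborhood of $\theta$, so the system is structurally unidentifiable for all inputs. By Remark 2, no admissible input exists.

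The hard part, such as it is, is spotting the correct conserved quantity $z = s - \tau v$; once it is in hand the algebra is immediate and the conclusion follows from standard uniqueness of ODE solutions. An alternative route would be to compute $\mathcal{O}_I(\tilde x_0, u)$ symbolically while keeping all input derivatives $u^{(j)}$ as free symbols and to verify that the column corresponding to $k_1$ vanishes to every order, but that would require a separate induction on the order of the Lie derivative; the invariant-manifold argument sidesteps that calculation entirely.
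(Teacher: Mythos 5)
Your proof is correct, and it takes a genuinely different and arguably tighter route than the paper's. You exhibit the invariant quantity $z(t) = s(t) - \tau v(t)$, show that under $\tau k_2 = 1$ its forcing term vanishes and under $s_0 = \tau v_0$ it starts at zero, conclude $z \equiv 0$ by uniqueness, and observe that the reduced dynamics $\dot v = k_2(u - v)$, $\dot s = u - v$ contain no $k_1$; hence the output is independent of $k_1$ for \emph{every} input, and Remark 2 gives the claim. The paper instead rewrites the system as the second-order ODE $\ddot v + (k_1\tau + k_2)\dot v + k_1 v = k_1 u + k_2 \dot u$, takes a sinusoidal input (invoking Fourier superposition), solves for the particular-solution coefficients by undetermined coefficients, and finds that $\partial_{k_1}$ of those coefficients vanishes exactly when $k_2 = 1/\tau$; it separately finds $s_0 = \tau v_0$ from $\partial_{k_1}\dot v(0) = 0$, and then substitutes both conditions into $\mathcal{O}_I(\tilde x_0, u)$ to show the $k_1$-column is zero and the rank drops to $4$. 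Your argument is more elementary and more general: it handles arbitrary inputs exactly rather than via a Fourier heuristic, it needs no transient/steady-state decomposition, and it gives a clean mechanistic reason for the degeneracy ($k_1$ only ever multiplies a quantity that is identically zero along the trajectory). What the paper's longer derivation buys is that it \emph{derives} the conditions $k_2 = 1/\tau$ and $s_0 = \tau v_0$ as the solution set of the non-identifiability equations rather than merely verifying a given sufficient condition, and it ties the result back to the rank deficiency of the observability-identifiability matrix used throughout Section 5. If you wanted your version to match that extra content, you would note that $(1-\tau k_2)(u-v)$ and $z(0)$ must both vanish for the invariance to hold, which recovers the same two conditions.
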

\begin{proof}
Please see the Appendix~\ref{appendix:proof_prop3} for details.
\end{proof}
% \begin{proposition}
% \label{prop4}
% The CTH-RV model~\eqref{eq:CTH-RV} is structurally locally identifiable given equilibrium initial conditions, higher-order ($u(t)\in\mathcal{C}^{1+}$) input and the space gap measurement.
% \end{proposition}

% \begin{proof}
% Plugging in the equilibrium initial condition~\eqref{eq:ics_2} to the $\mathcal{O}_I (\tilde{x}_0,u)$ in Proposition~\ref{prop2}, we obtain a full rank matrix. Therefore, the system should still be structurally locally identifiable.
% \end{proof}

\begin{remark}
Measuring the space gap alone $y(t) = s(t)$ leads to the same identifiability results as measuring both states $y(t) = [s(t), v(t)]^T$. One can arrive to this result by checking the rank of the observability-identifiability matrix $\mathcal{O}_I (\tilde{x}_0,u)$. This finding is in agreement with~\cite{PunzoModelCalibration2005,TREIBER2013922} that using space gap instead of velocity or acceleration profile to calibrate a car-following model leads to sensitivity in the objective function.
\end{remark}

\paragraph{Visualization}
As an illustration of the above analysis for CTH-RV, we visualize the output differences $e(\cdot,\cdot)$ corresponding the settings of Propositions~\ref{prop1}-\ref{prop3}. We first fix one parameter as $\theta_\text{true}$, and compare the output error under another parameter $\theta$. Let $e(\theta,\theta_\text{true})$ be defined as the \textit{mean-squared-error} (MSE):
\begin{equation}
\label{eq:cost_function}
    e(\theta, \theta_\text{true})=
    % \frac{1}{T}{\sum_{k=0}^{k=T} \lVert{y}_k(\theta_{\text{true}})-{y}_k(\theta)\rVert^2},
    \int_0^\mathcal{T}\lVert{y}(t,\theta_{\text{true}})-{y}(t,\theta)\rVert_2^2dt,
\end{equation}
which describes the error between the true output simulated by $\theta_{\text{true}}$ and the output simulated by $\theta$, under the same initial conditions and input. 
% The output is generated using an Euler method on the discretized dynamics of~\eqref{eq:dyn_sys_CF}, provided with a fixed step size $\Delta t$ and the total simulation time steps $T$. 
Let $\mathcal{T}$ denote the total time over which the ODE is solved. 

We visualize $e(\theta,\theta_\text{true})$ as one sweeps over the parameter space for CTH-RV (see Figure~\ref{fig:contour_CTHRV_1}-\ref{fig:contour_CTHRV_3}). Figure~\ref{fig:contour_CTHRV_1} shows that the error is zero only at $\theta=\theta_\text{true}$, indicating an identifiable system. Figure~\ref{fig:contour_CTHRV_2} and \ref{fig:contour_CTHRV_3}, on the contrary, show that even for a structurally locally identifiable system, there exists an initial condition, parameters, and input combinations such that distinct parameters exist and produce the same output, as suggested by Proposition~\ref{prop2} and \ref{prop3}.  Specifically, we see in Figure~\ref{fig:contour_CTHRV_2} that $k_1$ and $k_2$ are unidentifiable under an equilibrium initial condition and a constant input. This is consistent with the finding from~\cite{wang2020estimating}; in Figure~\ref{fig:contour_CTHRV_3}, we see that that $k_1$ is unidentifiable when the initial speed and space gap satisfies $s_0=\tau v_0$, even though the initial condition is not at a equilibrium state (i.e., $v_0\neq u_0$). 

%The visualization verifies that structural identifiability does not preclude the existence of special initial condition and parameter values such that the system becomes unidentifiable, 

\begin{figure*}
\begin{minipage}{\textwidth} 
    \includegraphics[width=\linewidth]{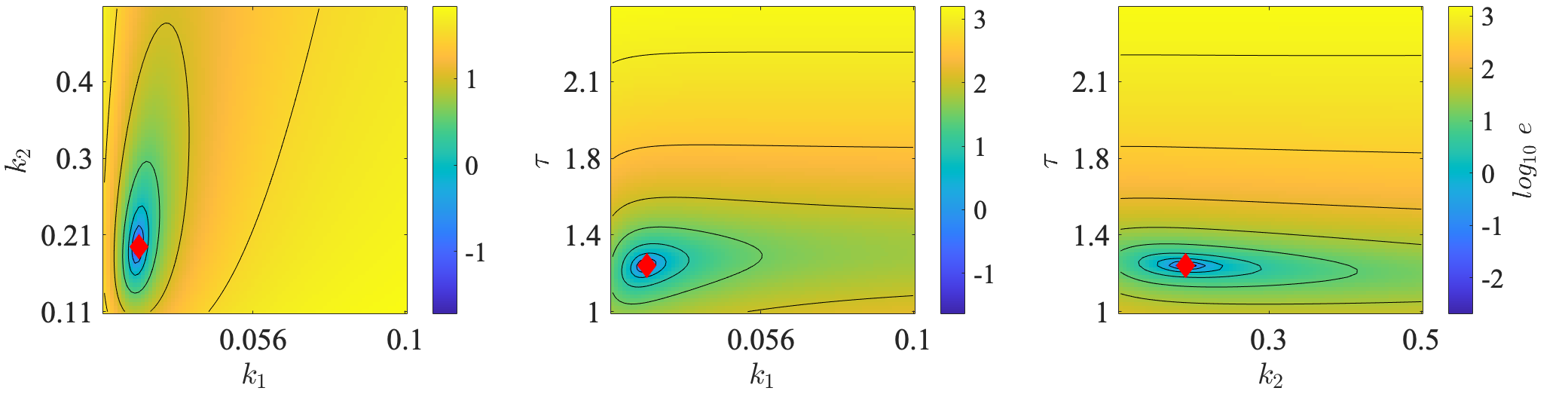}
    \subcaption{An identifiable scenario: initial condition $x_0=[60,20]^T$, input $u(t)=31$. $\theta_{\text{true}}=[k_1, k_2, \tau]^T=[0.0216, 0.1943, 1.2293]^T$.}
    \label{fig:contour_CTHRV_1}
\end{minipage}    
\hspace{\fill}  
\begin{minipage}{\textwidth} 
%  \vspace{0.5cm}
    \includegraphics[width=\linewidth]{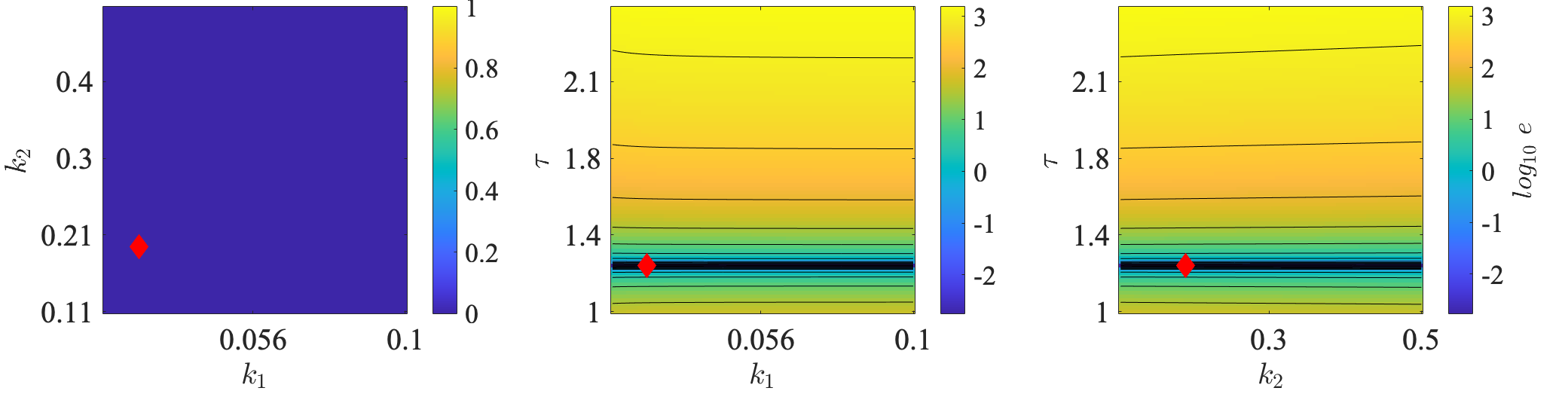}
    \subcaption{An unidentifiable scenario: equilibrium initial condition $x_0^*=[38.1, 31]^T$ and $u(t)=31$. $\theta_{\text{true}}=[k_1, k_2, \tau]^T=[0.0216, 0.1943, 1.2293]^T$.}
     \label{fig:contour_CTHRV_2}
\end{minipage}  
\hspace{\fill}  
\begin{minipage}{\textwidth} 
%  \vspace{0.5cm}
    \includegraphics[width=\linewidth]{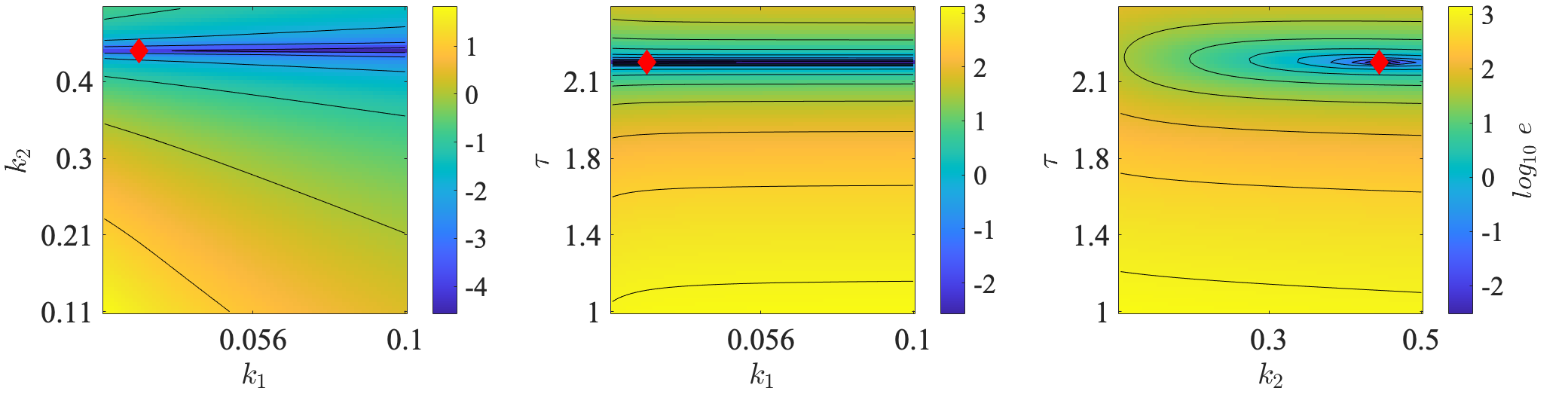}
    \subcaption{An unidentifiable scenario: initial condition $x_0=[72.7, 32.5]^T$ and $u(t)$ is shown in Figure~\ref{fig:lead_vel}. $\theta_{\text{true}}=[k_1, k_2, \tau]^T=[0.0216, 0.4472, 2.2361]^T$.}
     \label{fig:contour_CTHRV_3}
\end{minipage}  

\caption{Visualization of $e(\theta,\theta_{\text{true}})$ for CTH-RV. Red diamond indicates $\theta_{\text{true}}$. Even for structurally locally identifiable systems, there may exist an initial condition and input combination such that certain parameter become unidentifiable.
}
\label{fig:contour_CTHRV}
\end{figure*}

\subsection{Structural local identifiability analysis for other models}
Following the same procedure as was used to analyze the CTH-RV, we check the observability-identifiability matrix $\mathcal{O}_I (\tilde{x}_0,u)$ for the other three models, i.e., the OV model~\eqref{eq:bando}, the FTL model~\eqref{eq:FTL} and the IDM~\eqref{eq:Enhanced_IDM}. The structural identifiability results for each of the models under almost all initial conditions and the corresponding \textit{a priori} set of admissible inputs are summarized in Table~\ref{tab:id_result_generic_IC}. For specific initial conditions (e.g., an equilibrium initial condition), the system may require a higher-order input to enable structural identifiability (also see Table~\ref{tab:id_result_generic_IC}).

\begin{table}
\caption{Structural local identifiability summary.}
\centering
\begin{threeparttable}
\begin{tabular}{@{}cccc@{}}
\toprule
Model            & Parameters              & \multicolumn{2}{c}{\edit{Admissible polynomial} input $u(t)$ \edit{degree $n$}}  \\
\cline{3-4}
$f_{\text{CF}}$  & $\theta$                &  Generic (almost all) $x_0$ &  Equilibrium $x_0^*\dag$\\
\midrule
\multirow{1}{*}{CTH-RV} & \multirow{1}{*}{$k_1$, $k_2$, $\tau $} & $n\geq0$  & $n\geq1$\\
\multirow{1}{*}{OV}     & \multirow{1}{*}{$\alpha$, $a$, $b$, $h_m$} & $n\geq0$  & $n\geq1$\\
\multirow{1}{*}{FTL}    & \multirow{1}{*}{$C$, $\gamma$}  & $n\geq0$ & N/A\\
\multirow{1}{*}{IDM}    & \multirow{1}{*}{$a$, $b$, $s_j$, $v_f$, $T$} &$n\geq0$ & $n\geq1$\\
\bottomrule
\end{tabular}
\footnotesize
\begin{tablenotes}
\item[\dag] The equilibrium initial condition for each model is listed in Section~\ref{sec:preliminaries}.
\end{tablenotes}
\end{threeparttable}
\label{tab:id_result_generic_IC}
\end{table}

Table~\ref{tab:id_result_generic_IC} suggests that all the models are structurally locally identifiable, i.e., parameters for each model can be uniquely identified under any admissible input given almost \newedit{all} initial conditions, whether it is known or not, even if only space-gap is being measured. Moreover, a constant input \edit{degree $n=0$} is an admissible input to enable structural identifiability of all the models. Additionally, all inputs $u(t)$ \edit{with degree} $n\geq0$ are also admissible to enable structural identifiability. 

However, the analysis does not preclude the existence of an initial condition (e.g., an equilibrium initial condition) for which the models become unidentifiable for a \edit{constant} input. For some models, higher-order inputs are required to enable structural identifiability. For CTH-RV, OV and IDM, any input \edit{with degree} $n\geq1$ is an admissible input to enable identifiablity for the equilibrium initial condition. On the other hand, the FTL completely loses identifiability given an equilibrium initial condition $v_0=u_0$, since no admissible inputs can be found to enable identifiability. 

Overall, the main outcome of the structural local identifiability analysis is that we find no  fundamental intrinsic structural problems in any of the analyzed models. This means it is possible to design an experiment (i.e., chose an initial condition and an input) in order to uniquely infer the unknown model parameters.

The structural local identifiability analysis provides theoretical results that are generally valid for almost all numerical values of the initial conditions and the parameters, but the test could fail to detect unidentifiable scenarios (even for a simple CTH-RV model, it fails to detect a special initial condition (Proposition~\ref{prop2}) or parameter dependency (Proposition~\ref{prop3}) that causes non-identifiability). 
% In reality, no matter how unlikely the best-fit parameters would fall into a ``thin'' set of measure zero, it is possible and sometimes cannot be avoided a priori. 
\edit{In practice, identifiability would be impossible when close to these special conditions (because of the rank deficiency of the observability-identifiability matrix). The probability of being in a small open set around these measure-zero sets, although unlikely, is not zero. Therefore}, it is important to ask a different question in practice: \textit{given the experimental design (i.e., given a specific initial condition and input trajectory that are known), whether there exists multiple distinct parameters that would produce the same output}. If multiple distinct parameters are indistinguishable in the output space, then the corresponding model(s) are unidentifiable. We proceed to numerical methods in the next section to address this question.

\section{Practical identifiability analysis}
\label{sec:practical_id_results}
\begin{table}
\centering
\caption{Parameter bounds.}
{\begin{tabular}{cccc} \toprule
 Models &  Parameter $\theta$& Lower bound  $\theta_{\textbf{min}}$& Upper bound $\theta_{\textbf{max}}$  \\ 
 \midrule
%  \multirow{2}{*}{CTHRV} 
CTHRV
& $[k_1, k_2, \tau_h]$ 
& [0.001, 0.01, 0.1]
& [1, 1, 3]\\
% \multirow{2}{*}{OV}
OV   
& $[\alpha, a, h_m, b]$  
& [0.5, 10, 2, 18]  
& [3.3, 32, 30, 45]\\
  
% \multirow{2}{*}{GHR} 
FTL  &  $[C, \gamma]$   & [100, 1]    &[600, 3]\\
% \multirow{2}{*}{IDM}  
IDM 
& $[s_j, v_f, T, a,b]$    
& [3,  21,  0.1,  0.1,   0.5]   
& [25,  41,   3,   3,  5]
                      \\ \bottomrule
\end{tabular}}
\label{tab:param_bounds}
\end{table}
Although the initial condition and parameter subspace that loses structural identifiability for CTH-RV can be analytically solved for, it is much challenging to exhaustively find the special cases for other models composed of irrational functions. To this end, we use a numerical method to analyze a more specific notion of identifiability: \textit{given a specific initial and input condition that are known and non-trivial, i.e., non-equilibrium initial condition and time-varying input, are there distinct parameters that produce the same or similar output?} If such distinct parameter sets are found, then the corresponding model is unidentifiable to this specific experimental setup, irrespective of the cause (initial condition or parameters fall into a measure zero set or the input is not informative enough). In this section, we present identifiability analysis using \textit{direct test} for models~\eqref{eq:CTH-RV}, \eqref{eq:bando}, \eqref{eq:FTL} and \eqref{eq:Enhanced_IDM}. First, we provide the details of the numerical experiment setup, including a specific optimization formulation. Next, we solve~\eqref{eq:opt2} with 1) the tightest constraint $\epsilon=10^{-6}$ and 2) the relaxed constraint $\epsilon>10^{-6}$. Because the choice of the cut-off threshold $\epsilon$ is arbitrary, we incrementally increase $\epsilon$ to observe the sensitivity of model calibration to the measurement error.

% Setting $\epsilon=0$, direct test finds a counterexample to  two sets of distinct parameters that result in indistinguishable outputs, which corresponds to global unidentifiable systems (Definition~\ref{def:struct_glob_ident}), if there exists any. Therefore, under cases such that the model is structurally locally identifiable but not globally identifiable, direct test will be able to find the special subspace in $\mathcal{O}_I (\tilde{x}_0,u)$ that causes the matrix to be singular\Yanbing{correct to say subspace?}. 
% The indistinguishable parameters provide insights on the worst-case output such that model calibration will likely to fail.

% By investigating $\delta$-identifiability at $\epsilon>0$, we take into account of the model-fitting error during calibration due to model uncertainty or measurement noises. Because the choice of the cut-off threshold $\epsilon$ is arbitrary, we vary $\epsilon$ values to find the distinct sets of parameters but almost indistinguishable in the output space. By examining the relationship between the maximum parameter distance $\delta$ and the output difference threshold $\epsilon$, we can observe the sensitivity of model calibration to the measurement error.

For $\delta$-identifiability at both $\epsilon=10^{-6}$ and $\epsilon>10^{-6}$, we first provide the detailed analysis on the CTH-RV model~\eqref{eq:CTH-RV}, and summarize the results for the others. 
% The results indicate that the CTH-RV~\eqref{eq:CTH-RV} and FTL~\eqref{eq:FTL} model are 0.68-identifiable at $\epsilon=0$, meaning that they are unidentifiable. In addition, all of the models are 0.8-identifiable at $\epsilon=1.0$, meaning they are practically unidentifiable when the measurement tolerance is set at 1.0 (1.0m MSE for space gap errors), which is roughly the output error for the best-fit parameters found in previous studies~\cite{gunter2019modelcomparison,wang2020estimating}.

\subsection{Implementation details}
Consider the following optimization problem formulation for~\eqref{eq:opt2}:
\newedit{
\begin{equation}
\label{eq:opt3}
\begin{aligned}
& \underset{\theta_{1,i},\theta_{2,i} \in [\theta_{\text{min},i},\theta_{\text{max},i}], \forall i}{\text{maximize}}
& & d(\theta_1, \theta_2) = \frac{1}{\sqrt{n_{\theta}}}\sqrt{\sum_{i=1}^{n_{\theta}}\left( \frac{\theta_{1,i}-\theta_{2,i}}{\theta_{\text{max},i}-\theta_{\text{min},i}}\right)^2} \\
& \text{subject to}
& & e(\theta_1, \theta_2)=\frac{1}{K}{\sum_{k=0}^{k=K} \left\lVert y_k(\theta_1)-y_k(\theta_2)\right\rVert^2_2}\leq\epsilon.
\end{aligned}
\end{equation}
}
The subscript $i$ on $\theta$ denotes the $i^{\text{th}}$ element in the parameter vector. The resulting distance (objective function) is a normalized Euclidean distance scalar between 0 and 1 bounded by $\theta_{\text{min}}$ and $\theta_{\text{max}}$ (Table~\ref{tab:param_bounds}), i.e., $d(\theta_{\text{min}},\theta_{\text{max}})=1$. The trajectory difference $e(\cdot, \cdot)$ in the constraint is a mean squared error to measure the difference between $y_k(\theta_1)$ and $y_k(\theta_2)$, i.e., the discrete-time space-gap simulated with an Euler method using $\theta_1$, $\theta_2$, respectively. The number of decision variables is $2\times n_{\theta}$ where $n_{\theta}$ is the dimension of the parameters for each model. Recall that we denote the solution to~\eqref{eq:opt2} as $\delta^*$ and the corresponding parameter pair as $(\theta_1^*,\theta_2^*)$.

 The optimization solver used to solve~\eqref{eq:opt3} is \texttt{patternsearch} in the MATLAB global optimization toolbox~\cite{matlabGOTB}. The \texttt{patternsearch} is a gradient-free optimization solver that works with potentially non-smooth objective functions. More details regarding the solver can be found in~\cite{charles2002pattern}.
 
% The same initial conditions $x_0=[72.7, 32.5]^T$ and input (Figure~\ref{fig:lead_vel}). Note that the initial condition is at non-equilibrium because $v_0\neq u_0$. The nonlinear constraint describes a measurement error similar to the concept of a cost function~\eqref{eq:cost_function}. 
 %\Dan{somehow this sentence did not belong in the intro, i.e., you are telling details of the problem and how it is implemented, then in the dead center was a specific IC. put elsewhere., like whereever you put hte input}

\begin{figure}
    \centering
    \includegraphics[width=0.6\linewidth]{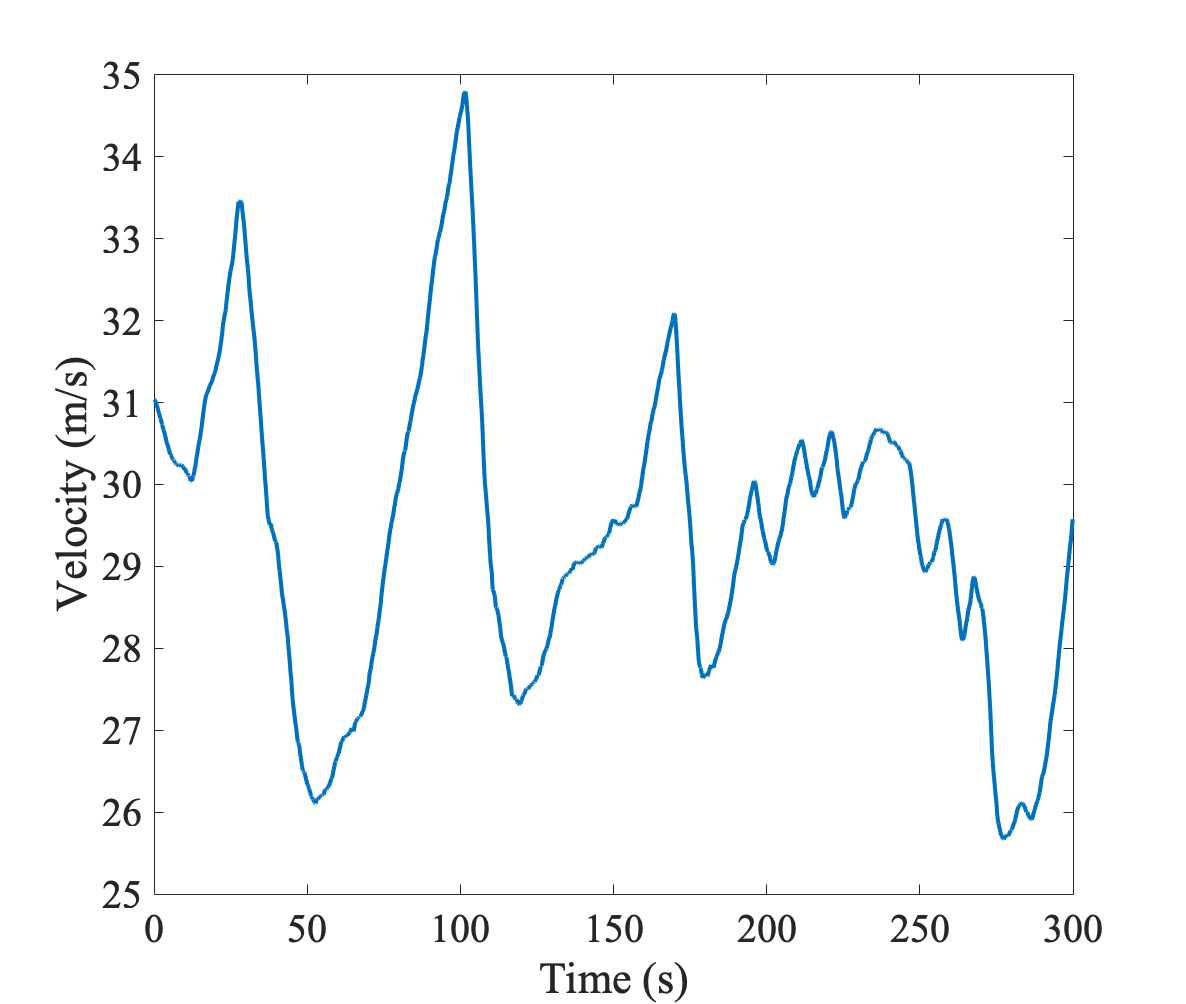}
    \caption{A time-varying lead vehicle velocity profile.}
    \label{fig:lead_vel}
\end{figure}

\subsection{Practical identifiability with error-free measurement}
In the numerical setting, we specify the tightest constraint at approximately zero (specifically, $\epsilon=10^{-6}$) and solve for the optimization problem~\eqref{eq:opt3} with a time-varying input profile (Figure~\ref{fig:lead_vel}) and a fixed, non-equilibrium initial condition $x_0=[72.7, 32.5]^T$. We use this initial condition and input for all of the models in the analysis for the remaining of this paper. Given this specific experiment setup, direct test aims to find indistinguishable (thus unidentifiable) parameters that separate farthest in the parameter space. This specific initial condition is chosen to illustrate the utility of the direct test, i.e. given an arbitrary but non-equilibrium initial condition, can we find two distinct parameters that generate the same output.
% This is equivalent to assessing global identifiability, as Definition~\ref{def:walter_practical_id} becomes equivalent to Definition~\ref{def:struct_glob_ident} when $\epsilon=0$. We first analyze global identifiability for CTH-RV, and summarize the other models. 

\paragraph{CTH-RV}
\begin{table}
\caption{Direct test finds two indistinguishable parameter sets for CTH-RV with $\epsilon=$1e-6}
\centering
\begin{tabular}{@{}ccrrr@{}}
\toprule

% \multicolumn{1}{c}{\multirow{1}{*}{\textbf{Model}}} & \multicolumn{1}{c}{\multirow{1}{*}{\textbf{Parameter}}} & \multicolumn{7}{c}{\textbf{Output error $r(\theta_1, \theta_2)$}} 
Model & Parameter & $\theta_1^*$ & $\theta_2^*$ & $\delta^*$ 
\\
 \midrule
\multirow{3}{*}{CTHRV} & $k_1$ & 1 & 0.001 \\
                        & $k_2$ & 0.4472 & 0.4472 & 0.5774 \\
                        & $\tau$ & 2.236 & 2.236
                        \\ 
\bottomrule
\end{tabular}
\label{tab:rho0_CTHRV}
\end{table}

\begin{figure}
    \centering
    \includegraphics[width=\linewidth]{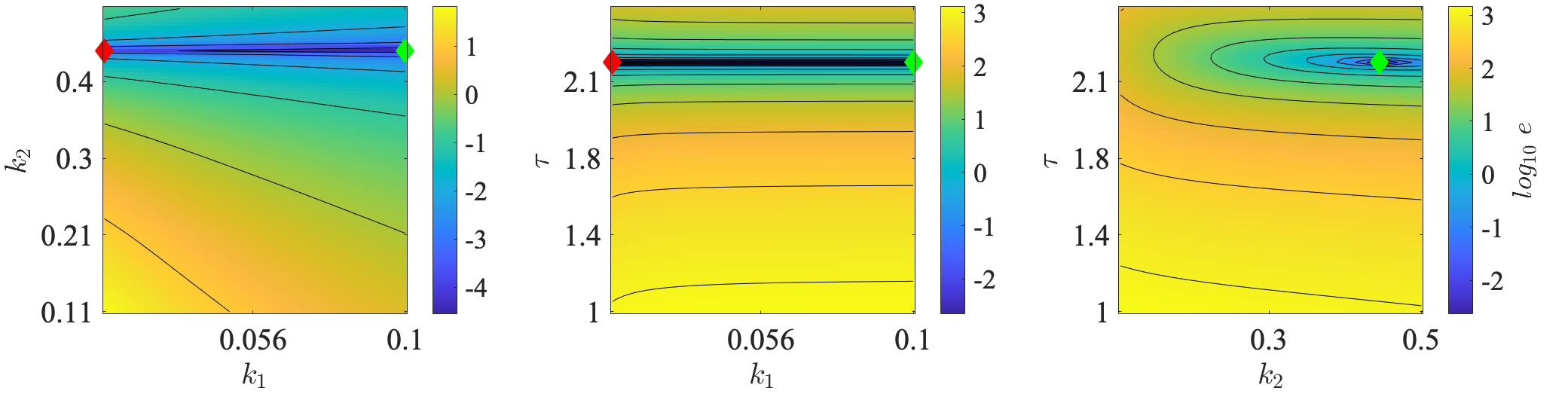}
    \caption{Visualize indistinguishable parameter sets $\theta_1^*$ (red diamond) and $\theta_2^*$ (green diamond) from Table~\ref{tab:rho0_CTHRV}.}
    \label{fig:contour_CTHRV_dual}
\end{figure}

The direct test result for CTH-RV by solving the optimization problem with $\epsilon=10^{-6}$ is listed in Table~\ref{tab:rho0_CTHRV}.  Table~\ref{tab:rho0_CTHRV} suggests that given a non-equilibrium lead velocity profile (Figure~\ref{fig:lead_vel}) and an arbitrary, non-equilibrium initial condition that are known, the CTH-RV is unidentifiable. Solving problem~\eqref{eq:opt3} results in $\delta^*=0.577$, meaning there exists two distinct parameters that generate the same output. Figure~\ref{fig:contour_CTHRV_dual} visualizes this result: $\theta_1^*$ and $\theta_2^*$ are far apart but produce the same output. The value of $e(\theta, \theta_1^*)$ remains constant along the unidentifiable parameter. Furthermore, the unidentifiable parameters fall into a set such that $\tau=s_0/v_0$ and $k_2=v_0/s_0$. This corresponds to Proposition~\ref{prop3} where $k_1$ cannot be uniquely identified. 

\edit{Additionally, we discovered that when the initial conditions are in a neighborhood of the measure-zero set identified in Proposition~\ref{prop3}, CTH-RV can still be practically unidentifiable, as shown in Figure~\ref{fig:CTHRV_indistinguishable}. In this example, the initial condition is $s_0 = 72$, $v_0=32$, and the parameters are $\theta_1=[1,0.394,2.35]^T$ and $\theta_2=[0.1,0.394,2.35]^T$. A small output difference is obtained ($\epsilon=0.0745$) with distinct parameters ($\delta^*=0.5201$). Note that $k_2\approx \dfrac{v_0}{s_0}$ and $\tau \approx \dfrac{s_0}{v_0}$, which mean that the initial conditions are not exactly in the measure-zero set as shown in Proposition~\ref{prop3}, but in a neighborhood of a it.}
\begin{figure}
    \centering
    \includegraphics[scale=0.2]{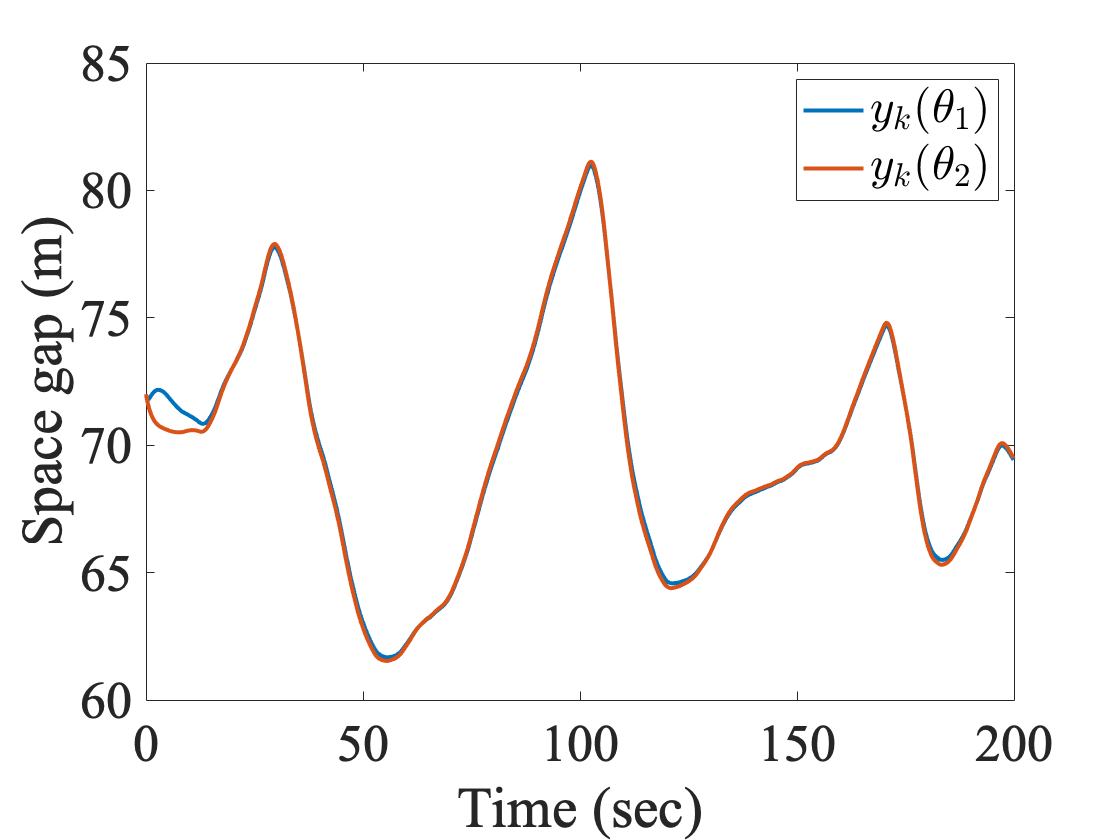}
    \caption{Visualize practically indistinguishable parameter sets for CTHRV.}
    \label{fig:CTHRV_indistinguishable}
\end{figure}
\paragraph{Other models}
\begin{table}
\caption{Direct test finds indistinguishable parameter sets for other models with $\epsilon=$1e-6}
\centering
\begin{tabular}{@{}ccccc@{}}
\toprule

% \multicolumn{1}{c}{\multirow{1}{*}{\textbf{Model}}} & \multicolumn{1}{c}{\multirow{1}{*}{\textbf{Parameter}}} & \multicolumn{7}{c}{\textbf{Output error $r(\theta_1, \theta_2)$}} 
Model & Parameter & $\theta_1^*$ & $\theta_2^*$ & $\delta^*$
\\
 \midrule

\multirow{4}{*}{OV}  & $\alpha$ & 3.0772 & 3.1427 &\multirow{4}{*}{0.0117} \\
                        & $a$ &  19.7485 & 19.7451 & \\
                        & $h_m$ & 22.2094 & 22.2122 & \\
                        & $b$ & 23.2986 & 23.2915 &  \\
                        \midrule
\multirow{2}{*}{FTL}  &  $C$ & 130.0285 & 599.9699 &\multirow{2}{*}{0.6766}\\
                    & $\gamma$ & 1 & 1.3582 & \\
                    \midrule
\multirow{5}{*}{IDM}  & $s_0$ & 10.5615 & 8.3913 &\multirow{5}{*}{0.0476}\\
                    & $v_0$ & 35.788 & 35.771 &\\
                    & $T$ & 2.787 & 2.903 &\\
                    & $a$ & 2.559 & 2.494 &\\
                    & $b$ & 3.395 & 3.395 & \\\bottomrule
\end{tabular}
\label{tab:rho0}
\end{table}
As for the other three models, the solution to the direct test~\eqref{eq:opt3} is summarized in Table~\ref{tab:rho0}. The results indicate that the FTL~\eqref{eq:FTL} is also unidentifiable under the specific experiment setup ($\delta^*=0.677$). OV~\eqref{eq:bando} and IDM~\eqref{eq:Enhanced_IDM} are practically identifiable ($\delta^*=0.0117$ and $0.0476$, respectively, both are small), which suggests that only parameters that are close to each other can produce the same output. The solutions found by numerical direct test are visualized in Figures~\ref{fig:contour_CTHRV_dual}-\ref{fig:contour_IDM_dual} for the corresponding models. 

\begin{figure}
    \centering
    \includegraphics[width=\linewidth]{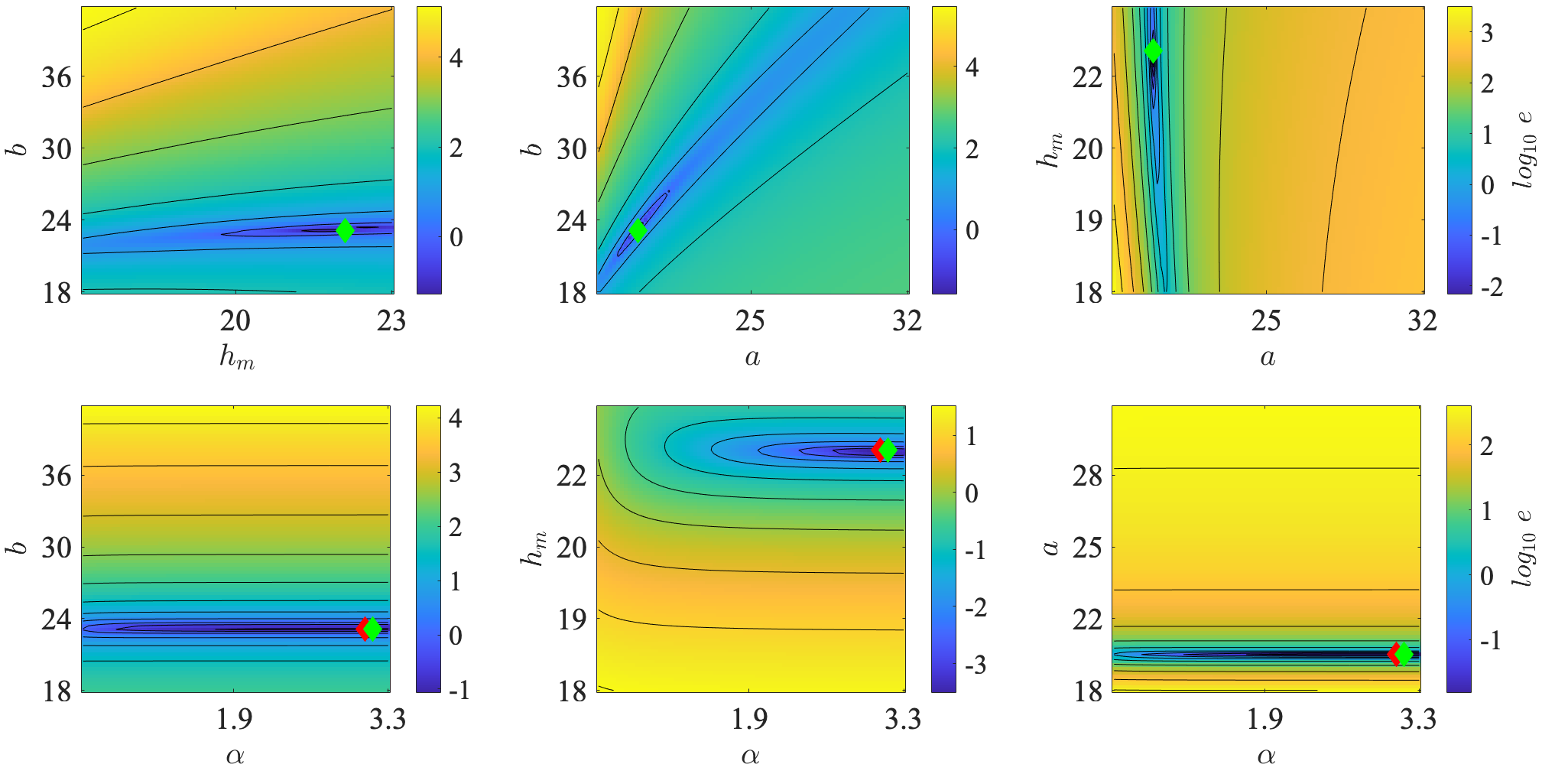}
    \caption{Visualize indistinguishable parameter sets $\theta_1^*$ (red diamond) and $\theta_2^*$ (green diamond) for OV from Table~\ref{tab:rho0}.}
    \label{fig:contour_BANDO_dual}
\end{figure}

\begin{figure}
    \centering
    \includegraphics[width=0.5\linewidth]{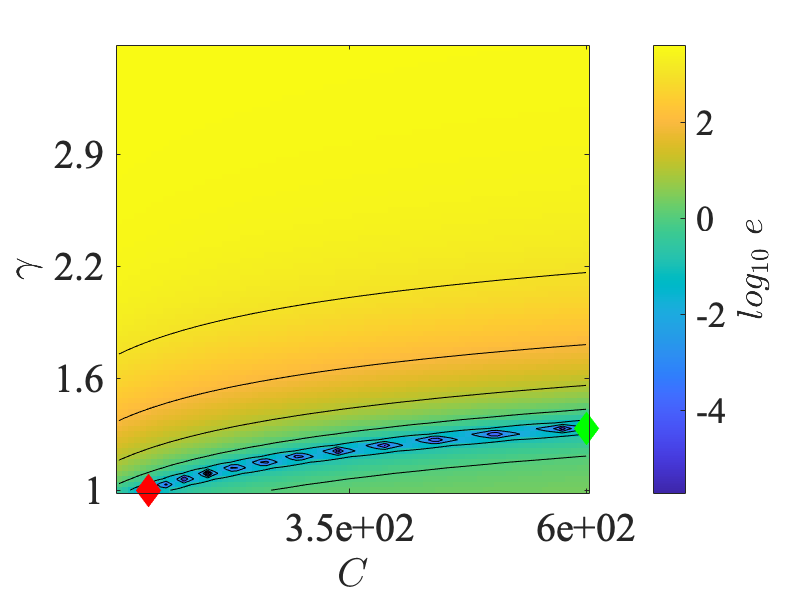}
    \caption{Visualize indistinguishable parameter sets $\theta_1^*$ (red diamond) and $\theta_2^*$ (green diamond) for FTL from Table~\ref{tab:rho0}.}
    \label{fig:contour_FTL_dual}
\end{figure}

\begin{figure}
    \centering
    \includegraphics[width=\linewidth]{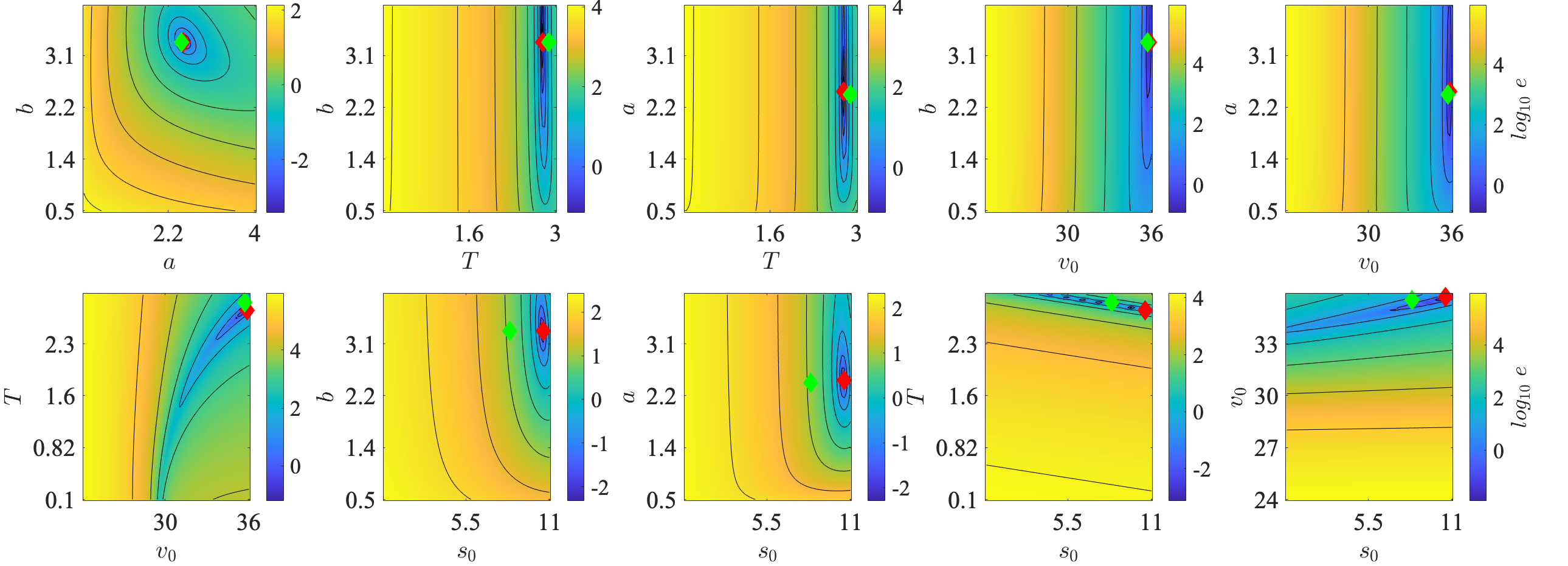}
    \caption{Visualize indistinguishable parameter sets $\theta_1^*$ (red diamond) and $\theta_2^*$ (green diamond) for IDM from Table~\ref{tab:rho0}.}
    \label{fig:contour_IDM_dual}
\end{figure}

\begin{figure*}
\begin{minipage}{0.49\textwidth} 
    \includegraphics[width=\linewidth]{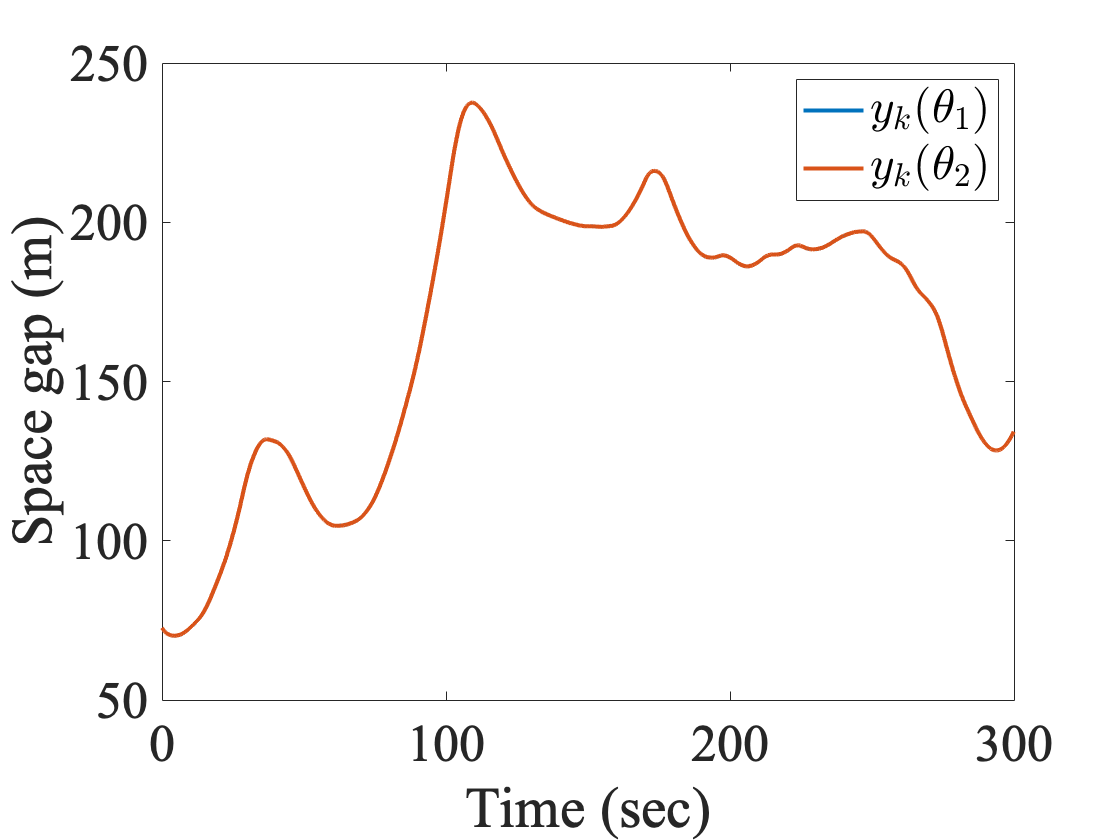}
    \subcaption{$e(\theta_1,\theta_2)=0$}    
    \label{fig:dual_IDM_0}
\end{minipage}    
\hspace{\fill}  
\begin{minipage}{0.49\textwidth} 
%  \vspace{0.5cm}
    \includegraphics[width=\linewidth]{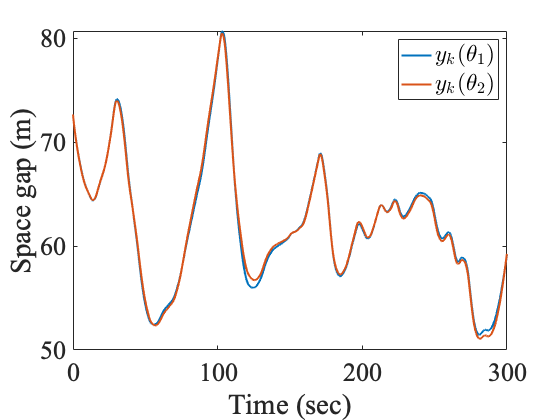}
    \subcaption{$e(\theta_1,\theta_2)=0.1$}
     \label{fig:dual_IDM_01}
\end{minipage}    

\begin{minipage}{0.49\textwidth} 
% \vspace{0.5cm}
    \includegraphics[width=\linewidth]{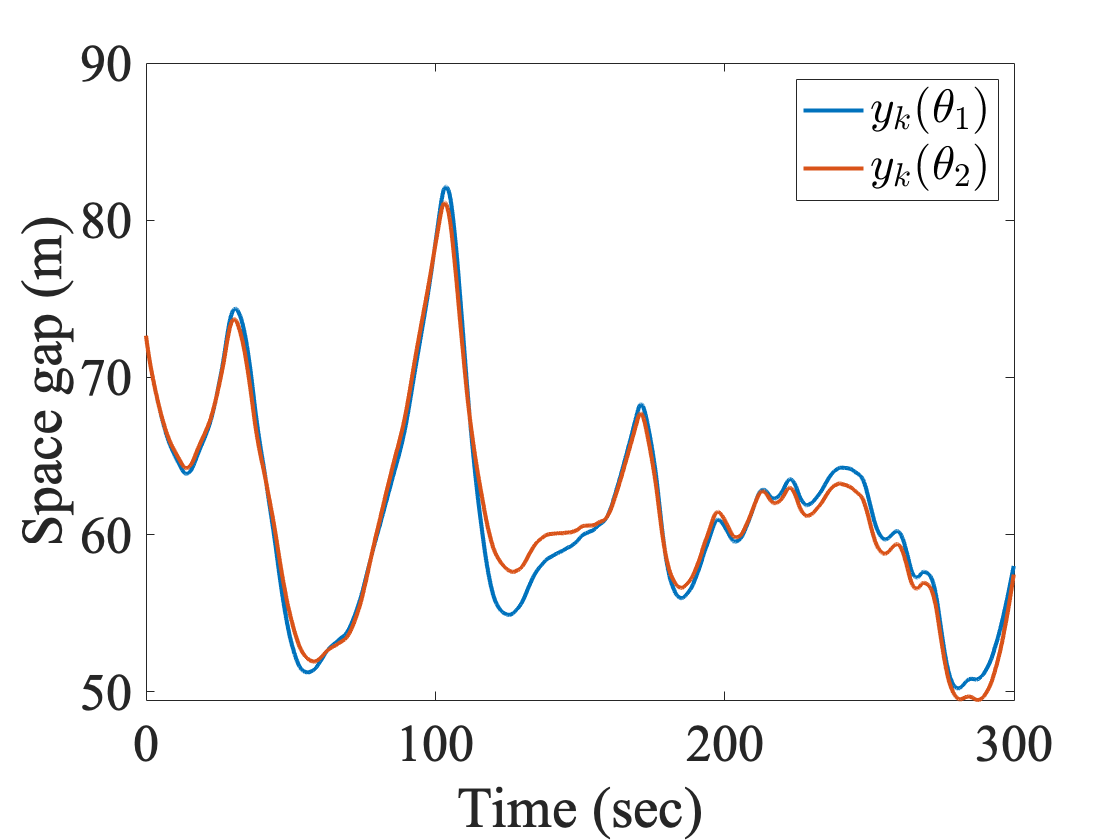}
    \subcaption{$e(\theta_1,\theta_2)=1$}
    \label{fig:dual_IDM_1}
\end{minipage}   
\hspace{\fill} 
\begin{minipage}{0.49\textwidth} 
% \vspace{0.5cm}
    \includegraphics[width=\linewidth]{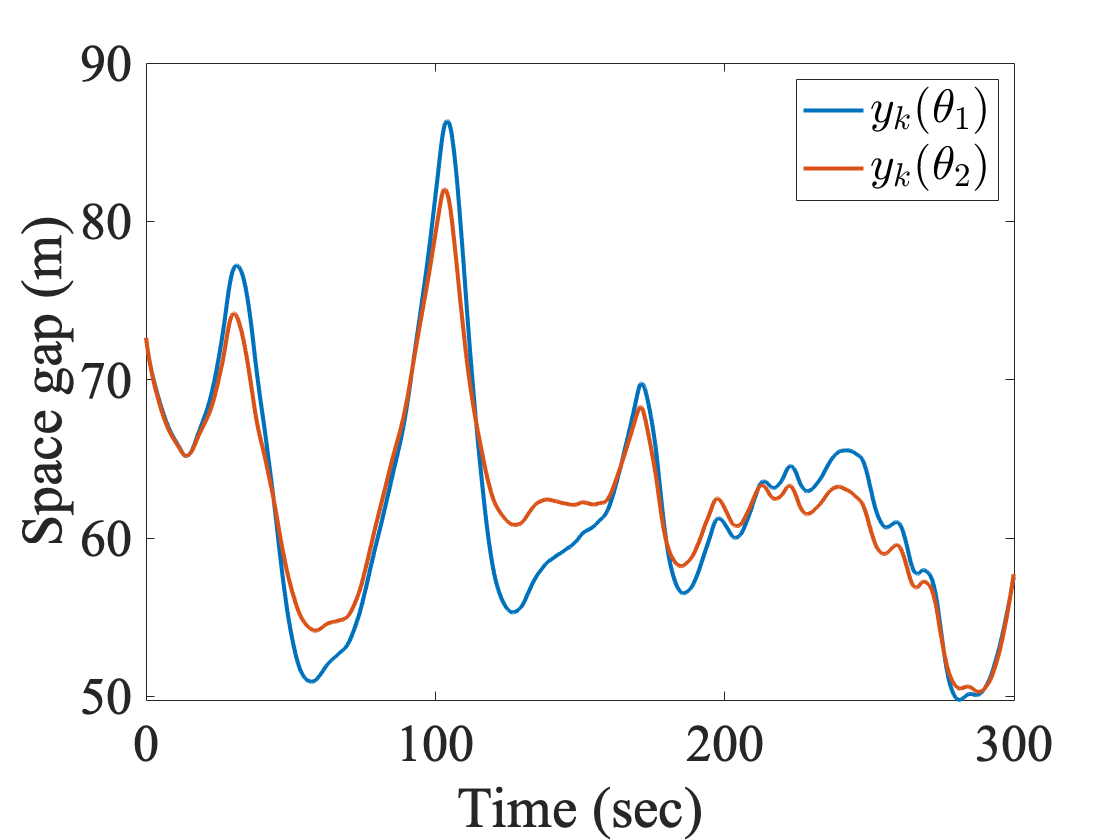}
    \subcaption{$e(\theta_1,\theta_2)=5$}
    \label{fig:dual_IDM_5}
\end{minipage} 
\caption{A demonstration of various trajectory differences $e(\theta_1,\theta_2)$ of IDM.}
\label{fig:dV_IDM_r}
\end{figure*}

% \begin{figure*}
% \begin{minipage}{0.49\textwidth} 
%     \includegraphics[width=\linewidth]{figures/dual_cthrv.png} 
%     \subcaption{CTH-RV}    
%     \label{fig:dual_CTHRV}
% \end{minipage}    
% \hspace{\fill}  
% \begin{minipage}{0.49\textwidth} 
% %  \vspace{0.5cm}
%     \includegraphics[width=\linewidth]{figures/dual_bando.png}
%     \subcaption{OV}
%      \label{fig:dual_bando}
% \end{minipage}    
% \begin{minipage}{0.49\textwidth} 
% % \vspace{0.5cm}
%     \includegraphics[width=\linewidth]{figures/dual_FTL.png}
%     \subcaption{FTL}
%     \label{fig:dual_FTL}
% \end{minipage}   
% \hspace{\fill} 
% \begin{minipage}{0.49\textwidth} 
% % \vspace{0.5cm}
%     \includegraphics[width=\linewidth]{figures/dual_IDM.png}
%     \subcaption{IDM}
%     \label{fig:dual_IDM}
% \end{minipage}
% \caption{Direct test solutions of problem~\eqref{eq:opt3} with multiple $\epsilon$ choices. The red dots are the optimal solutions with the largest possible $\delta$ that could be found while satisfying the constraint.}
% \label{fig:dual_params}
% \end{figure*}

\begin{figure}
    \centering
    \includegraphics[width=0.7\linewidth]{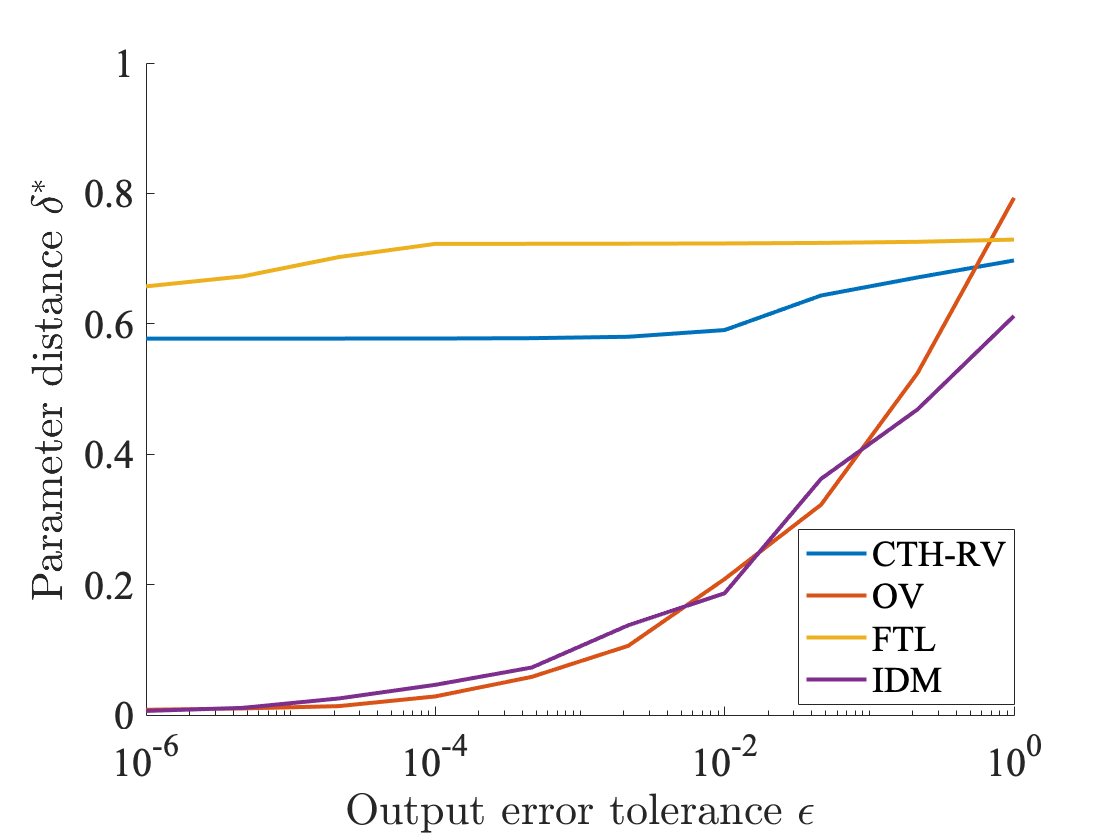}
    \caption{The sensitivity of practical identifiability with respect to the error threshold $\epsilon$. The specific setup is $x_0=[72.7, 32.5]^T$ and input $u(t)$ shown in Figure~\ref{fig:lead_vel} for all the models.}
    \label{fig:id_curve}
\end{figure}

\subsection{Practical identifiability when measurement noise is present}

\edit{The practical identifiability is nontrivial to quantify because 1) the threshold for $\delta^*$ to be identifiable/unidentifiable cannot be easily generalized across all models. Parameters for each model also have different physical meanings. 2) The trajectory error $\epsilon$ has multiple possible formulations (RMSE, ARE, MSE, etc.), making a universal cut-off threshold for practical identifiability impossible. Therefore, we provide the sensitivity analysis instead. The analysis is to provide a sense of robustness of parameter estimation with the presence of imperfect measurements (e.g., with measurement errors).}

The value of $\epsilon$ used in the constraint of the direct test problem is user-defined, and must be specified before solving the numerical problem. Therefore, we vary $\epsilon$ values to find the maximum parameter distances for each $\epsilon$ (see, for example, $e(\cdot,\cdot)=0, 0.1, 1$ and $5 \text{m}^2$ for IDM in Figure~\ref{fig:dV_IDM_r}). This is to analyze the sensitivity of the maximum distance between two indistinguishable parameters with respect to the tolerance on the output differences. Previous studies suggest that calibrating on empirical data achieves the lowest MSE of 1-2$\text{m}^2$ in space gap~\cite{gunter2019modelcomparison,wang2020estimating}, our analysis on practical identifiability will focus on $e(\cdot,\cdot)$ to be less than $1\text{m}^2$, or $\epsilon \in [0,1]$.

In particular, we select $\epsilon$  equally-spaced in log space between $10^{-6}$ and $10^{0}$, and solve for $\delta^*$ and the corresponding $(\theta_1^*,\theta_2^*)$ in problem~\eqref{eq:opt3}. The resulting curves shown in Figure~\ref{fig:id_curve} are the solutions $\delta^*$ as a function of $\epsilon$ in the constraint for each model. The curve is the maximum distance in parameter space such that the output error does not exceed $\epsilon$. Clearly, as the output difference $\epsilon$ increases, so does the maximum parameter distance $\delta^*$. Intuitively, given a specific initial condition and lead vehicle trajectory, as measurement error increases, the best-fit parameters are more likely to be non-unique, and the estimated parameters are less robust in the worst case. 

Furthermore, Figure~\ref{fig:id_curve} suggests that the CTH-RV and the FTL are practically unidentifiable no matter what output error threshold to chose, under the given initial and input conditions. I.e., there is no guarantee to find the unique parameters for the CTH-RV and the FTL even with perfect (error-free) data. On the other hand, the OV and the IDM can be practically identifiable when the threshold for output error is decreased. However, identifiability may not be achieved if the data is contaminated with noises ($\epsilon$ is large).

In summary, the \textit{direct test} is a conceptually straightforward method to test identifiability given a particular experimental design (i.e., with a known initial condition and a known input). \edit{It is very helpful to know (for model that is structurally locally identifiable) if the experiment leads to practical unidentifiability. Our approach provides a tool that highlights where an experiment can fail in the sense that similar outputs are obtained by distinct parameters}. This practical identifiability test is complementary to that of \textit{structural identifiability} analysis, which provides theoretical identifiability results for almost all initial conditions and parameters, but does not preclude the existence of special initial conditions and parameters (those belong to a set of measure zero) that result in non-identifiability (shown in Proposition~\ref{prop2} and \ref{prop3}). Note also that the \textit{direct test} does not aim to find \textit{all} initial conditions and parameters that fall into a set of measure zero, but simply tests if indistinguishable parameters exist given a full experimental setup, which can be useful in practice. Having a user-defined output difference threshold $\epsilon$ gives the \textit{direct test} more flexibility to understand the impacts of output measurement errors.

However, numerical test also has some disadvantages. For example, depending on the optimization solver and the set of starting points, the numerical approach may not find the true global maximum, and thus may misleadingly provide a lower bound on $\delta^*$, rather than the global maximum. In addition, the numerical direct test is performed based on a specific initial condition and input trajectory that are known, which limits its use to inform experimental designs. \edit{This approach does not consider practical experimental design, including how to design one or several experiments to best estimate the parameters, which is an interesting direction for further investigation.} Nevertheless, it is insightful as a verification tool to diagnose the potential unidentifiability under a specific setup.

\section{Conclusion}
In this work, we study the structural and practical identifiability of four car-following models. 

The structural identifiability analysis is carried through using a \textit{differential geometry} approach. It provides theoretical identifiability results under error-free assumptions for almost all initial conditions and parameters, and informs the admissible input condition that enables structural identifiability. However, it does not preclude the existence of initial conditions and parameters that lose identifiability. The results from the \textit{differential geometry} analysis show that all models are structurally locally identifiable, i.e., all models in theory have unique parameterizations when fitting with space gap data given almost all initial conditions and any admissible input. For some initial conditions such as an equilibrium initial condition, higher-order input is required for CTH-RV, OV and IDM to be identifiable. No admissible input exists for FTL when \newedit{the} initial condition is at equilibrium to enable identifiability.

As a complementary analysis, we use a numerical \textit{direct test} to study practical identifiability given a specific experimental setup (a given known initial condition and input). It also provides insights on the sensitivity of parameter identifiability due to measurement errors on the output. The \textit{direct test} finds that CTH-RV and FTL are practically unidentifiable, meaning that there exists multiple distinct parameters that produce the same output, given a specific experimental setup. It also suggests that OV and IDM are practically identifiable when the measurement error is small. Although only four models are investigated, the provided methods can be applied to other car following models.

The findings also open up questions for future research. For example, we are interested to design an experiment or sets of experiments that avoid problematic initial conditions for identifiability. Design of reduced order models that allow robust parameter identification while reflecting the dynamics of car following behavior are also of interest.  

\section{Acknowledgement}
    This material is based upon work supported by the National Science Foundation under Grant No. CMMI-1853913 and the USDOT Dwight D. Eisenhower Fellowship program under Grant No. 693JJ32145022. 
    It is also supported by the Inria Associated Team MEMENTO “Modeling autonomous vehicles in traffic flow.”
\appendix
\section{Appendix}
\subsection{An example for structurally locally identifiable CTH-RV}
\label{appendix:example_rank5}
Given the symbolic matrix~\eqref{eq:O_1}, one can find numerical substitutions for all the symbolic variables such that~\eqref{eq:O_1} is full rank. \newedit{Such substitutions belong to a generic initial condition and parameter set. One example is:}
\begin{equation}
    [k_1,k_2,\tau, u_0,v_0,s_0]=[0.01,0.12,1.4,30,33,40].
\end{equation}
The corresponding identifiability matrix becomes:
\begin{equation}
\label{eq:O_1_rank5}
\begin{aligned}
    &\mathcal{O}_I (\tilde{x}_0,u) = \\
    &\begin{bmatrix}
    1 & 0 & 0 & 0 & 0\\
    0 &-1 & 0 & 0 & 0\\
    -0.0100  & 0.134&  6.20& 3.00& 0.330\\
     0.00134 &-0.00796 & 1.579&-0.824&-0.0484\\
    -0.0000796 & -0.000274  &-0.658&  0.107 &  0.003456
    \end{bmatrix}
    \end{aligned},
\end{equation}
and rank($\mathcal{O}_I (\tilde{x}_0,u)$)=5.

\subsection{Proof for proposition~\ref{prop3}}
\label{appendix:proof_prop3}
Recall the system of equations for CTH-RV:

\begin{equation}
    \label{eq:CTH-FL}
    \begin{aligned}
    \dot{x}(t) = 
    \begin{bmatrix}
    \dot{s}(t)\\
    \dot{v}(t)
    \end{bmatrix}
    &=
    \begin{bmatrix}
    u(t)-v(t)\\
    k_1(s(t) - \tau v(t)) + k_2(u(t) -  v(t))  
    \end{bmatrix}\\
     y(t) &= x(t)
     \end{aligned}
\end{equation}
This system in fact can be written as a scalar differential equation:
\begin{equation}
\label{eq:v_d}
    \dot{v}(t) = k_1 \int_0^t{(u(\xi)-v(\xi))}d\xi+(-k_1\tau-k_2)v(t)+k_2u(t).
\end{equation}
Take the derivative on both sides to get rid of the integral operator:
\begin{equation}
\label{eq:v_dd}
    \ddot{v}(t) = k_1 {(u(t)-v(t))}dt+(-k_1\tau-k_2)\dot{v}(t)+k_2\dot{u}(t).
\end{equation}
Re-arrange to separate the input and output:
\begin{equation}
\label{eq:v_dd_rearrange}
    \ddot{v}(t)+(k_1\tau+k_2)\dot{v}(t)+k_1 v(t) = k_1u(t)+k_2\dot{u}(t).
\end{equation}
Now this is the standard form of a second-order non-homogeneous differential equation. The solution $v(t)$ has two parts, a homogeneous solution (or complementary solution $v_c(t)$, corresponds to the transient response), and a particular solution (or $V_p(t)$, corresponds to the steady state response). The general solution would be the sum of two parts:
\begin{equation}
    v(t) = v_c(t) + V_p(t).
\end{equation}
First let's see the homogeneous solution $v_c(t)$ by solving
\begin{equation}
\label{eq:vc}
    \ddot{v}(t)+(k_1\tau+k_2)\dot{v}(t)+k_1 v(t) = 0.
\end{equation}
The solution is of the form:
\begin{equation}
    v_c(t) = c_1 e^{r_1t}+c_2 e^{r_2t},
\end{equation}
where $r_1$ and $r_2$ are the roots of the characteristic equation for~\eqref{eq:vc}, and $c_1$ and $c_2$ are constants, which can be solved using initial values after the particular solution $V_p(t)$ is obtained.

Moving on to $V_p(t)$. We will show the condition such that the steady state response $V_p(t)$ does not depend on $k_1$.

For simplicity, let us assume the input (forcing) function as a sinusoidal wave, i.e., $u(t) = a\text{sin}(\omega t)$. Because we can represent almost all functions by Fourier series, which is a superposition of sine and cosine waves, the choice of $u(t)$ does not change the solution. The forcing term, or the RHS of~\eqref{eq:v_dd_rearrange} is
\begin{equation}
    g(t) = k_1u(t)+k_2\dot{u}(t) = k_1a\text{sin}(\omega t)+k_2a\omega \text{cos}(\omega t).
\end{equation}
By undetermined coefficients method, we can solve for the particular solution and the corresponding coefficients:
\begin{equation}
    V_p(t) = A\text{sin}(\omega t)+B\text{cos}(\omega t),
\end{equation}
where $A,B$ can be solved by plugging $V_p(t)$ into~\eqref{eq:v_dd_rearrange}. $A,B$ can be represented in terms of $a, \omega$ and the model parameters $\theta$. We will simplify the expressions by denoting $A,B$ as functions of some parameters:
\begin{equation}
    \begin{aligned}
    A &= f_A(a,\omega,\theta)\\
    B &= f_B(a,\omega,\theta).
    \end{aligned}
\end{equation}
Recall the superposition principle for ODE since ODE is a linear operator, the particular solution will always be a combination of sine and cosine waves with the same frequency but different magnitudes as the input $u(t)$. In this case, we just need to see if $A,B$ depend on any parameter $p \in \{k_1, k_2,\tau\}$ to make a difference in the steady state response. This can be done by taking the partial derivatives $\dfrac{\partial f_A}{\partial p}$ and $\dfrac{\partial f_B}{\partial p}$ and examine the dependency. In other words:
\begin{equation}
\label{eq:find}
\begin{aligned}
&\text{find the solution $\mathcal{S}_p$ for}\\
&\dfrac{\partial f_A}{\partial p}=0 \text{ and }\ \dfrac{\partial f_B}{\partial p}=0,\ \forall \ p\in \{k_1, k_2,\tau\} 
% \in[p_{\text{min}},p_{\text{max}}],
\in \mathcal{P}
\end{aligned}
\end{equation}
where $\mathcal{P}$ is the set of legal values for parameter $p$.
If the solution set for each $p$, $\mathcal{S}_p$, does not depend on $p$, then $p$ is unidentifiable. Solve for~\eqref{eq:find} we get
% \begin{gather}
% \label{eq:sol1}
% p = k_1\text{ and}\\
% \dfrac{\partial f_A}{\partial k_1} = 0 \text{ if }
%     \begin{cases} 
%       k_2 = \dfrac{1}{\tau} \text{ or}\\
%       k_2 = \omega + k_1\tau + \dfrac{k_1}{\omega}\text{ or}\\
%       k_2 = -\omega + k_1\tau - \dfrac{k_1}{\omega}
%   \end{cases}\\
% \dfrac{\partial f_B}{\partial k_1} = 0 \text{ if }
%     \begin{cases} 
%       k_2 = \dfrac{1}{\tau} \text{ or}\\
%       k_2 = -\dfrac{k_1}{\tau}.
%   \end{cases}
% \label{eq:condition_ss}
% \end{gather}
\begin{equation}
\label{eq:sol1}
\begin{aligned}
    \mathcal{S}_{k_1}&=\{k_2 = \dfrac{1}{\tau}\}\\
    \mathcal{S}_{k_2}&=\emptyset\\
    \mathcal{S}_{\tau}&=\emptyset.
\end{aligned}
\end{equation}
This suggests that $A,B$ are independent of $k_1$ if $k_2 = \dfrac{1}{\tau}$. 
% However, this condition is considered as invalid because it is reasonable to assume that the parameters are independent of each other, otherwise the model has redundant parameterization and must be reduced. Therefore, the valid solution of $\mathcal{S}_{k_1}$ should be $\emptyset$. We conclude that for a sufficiently exciting $u(t)$, assuming each parameter is independent of each other, the steady state solution $V_p(t)$ depends on every parameter. 

Now let us go back to the initial condition and the transient response. In order to use the information $s(0) = s_0$, we'll need to work on the first-order differential equation~\eqref{eq:v_d} instead of~\eqref{eq:v_dd_rearrange}. Simply plug in the initial condition $s(0) = s_0$ and $v(0) = v_0$ in~\eqref{eq:v_d} to obtain:
\begin{equation}
\label{eq:IVP}
    \dot{v}(0) = k_1 s_0+(-k_1\tau-k_2)v_0+k_2u_0,
\end{equation}
where $\int_0^t{(u(\xi)-v(\xi))}d\xi |_{t=0} = s_0$, and $u_0$ is the initial value of the input. Now if we do the same as before~\eqref{eq:find} by taking partial derivatives of $\dot{v}(0)$ to find the unidentifiable parameter(s) and the corresponding ``condition" in order for that parameter to be unidentifiable, i.e.,
\begin{equation}
\label{eq:findp_vdot0}
\begin{aligned}
\text{for }p\in \{k_1, k_2,\tau\} &\text{, find the solution $\mathcal{S}_p$ for}\\
\dfrac{\partial \dot{v}(0)}{\partial p}=0 \ &\forall
% \ p\in[p_{\text{min}},p_{\text{max}}]
\ p\in \mathcal{P}
\end{aligned}
\end{equation}
Solving~\eqref{eq:findp_vdot0} we get:
\begin{equation}
\label{eq:sol2}
\begin{aligned}
    \mathcal{S}_{k_1}&=\{s_0=\tau v_0\}\\
    \mathcal{S}_{k_2}&=\emptyset\\
    \mathcal{S}_{\tau}&=\emptyset.
\end{aligned}
\end{equation}
% Now $s_0=\tau v_0$ is a valid condition, because it is reasonable to assume that the initial state is a rational function of the parameters, i.e., $x(0)=x_0(\theta)$ as mentioned in~\cite{saccomani2003parameter}. 
The solution shows that $k_1$ cannot be identified if $\tau=\dfrac{s_0}{v_0}$ from the transient response. Substitute $\tau=\dfrac{s_0}{v_0}$ and $k_2=1/\tau=\dfrac{v_0}{s_0}$ into the $\mathcal{O}_I (\tilde{x}_0,u)$, we obtain:
\begin{equation}
\label{eq:O_appendix}
\begin{aligned}
    &\mathcal{O}_I (\tilde{x}_0,u) = \\
    &\begin{bmatrix}
    1 & 0 & 0 & 0 & 0\\
    0 &-1 & 0 & 0 & 0\\
    -k_1 & v_0/s_0+k_1s_0/v_0 & 0 & v_0-u(t) & k_1 v_0\\
    -k_1(v_0/s_0+k_1s_0/v_0) & (v_0/s_0+k_1s_0/v_0)^2-k_1 & 0 & o_{44}& o_{45}&\\
    o_{51}& o_{52}& 0 &o_{54}& o_{55}&
    \end{bmatrix}
    \end{aligned}
\end{equation}

\begingroup
\allowdisplaybreaks
\begin{align*}
o_{44} &= -\dot{u}(t)-(v_0/s_0+k_1s_0/v_0)(v_0-u(t))-v_0(v_0-u(t))/s_0\\
o_{45} &= -k_1v_0(v_0/s_0+k_1s_0/v_0)-k_1v_0(v_0-u(t))/s_0\\
o_{51} & =k_1 (k_1 - (v_0/s_0 + k_1 s_0/v_0)^2)\\
o_{52} & =-(v_0/s_0+k_1s_0/v_0)(k_1-(v_0/s_0+k_1s_0/v_0)^2)-k_1(v_0/s_0+k_1s_0/v_0)\\
o_{54} & = \dot{u}(t)(2v_0/s_0+k_1 s_0/v_0)-k_1(v_0-u(t))-(k_1-(v_0/s_0+k_1 s_0/v_0)^2)\\&(v_0-u(t))-\ddot{u}(t)+(v_0(2v_0/s_0+2k_1 s_0/v_0))/s_0\\
o_{55} &= k_1v_0\dot{u}(t)/s_0-k_1v_0(k_1-(v_0/s_0+k_1 s_0/v_0)^2)-k_1^2(v_0-u(t))+\\&k_1v_0(v_0/s_0 + k_1 s_0/v_0)(v_0-u(t))^2/s_0.
\end{align*}
\endgroup
Matrix~\eqref{eq:O_appendix} clearly shows that the above substitution results in rank($\mathcal{O}_I (\tilde{x}_0,u)$)=4. The column corresponding to parameter $k_1$ is zero, meaning that $k_1$ is unidentifiable. This result is in agreement with the analytical proof that $k_1$ does not affect either the transient response or the steady state response. Note that we can choose $u_0 \neq v_0$ such that the initial condition of the system is at non-equilibrium. Please see Figure~\ref{fig:contour_CTHRV_3} for a visualization. Hence we complete the proof.

%  \begin{figure}
%      \centering
%      \includegraphics[width=\linewidth]{figures/contour_CTHRV_proof.png}
%      \caption{Contour plots for the cost function of calibrating a CTH-RV model when $k_1$ is unidentifiable. The true parameters $\theta_{\rm{true}}=[k_1, k_2, \tau]=[0.1, \frac{v_0}{s_0}, \frac{s_0}{v_0}]$ are indicated by the green diamonds.}
%      \label{fig:contour_CTHRV_proof}
% \end{figure}

\bibliographystyle{abbrv}
\bibliography{refs}
\end{document}